\documentclass[pdflatex,sn-mathphys-num]{sn-jnl}

\usepackage{graphicx}%
\usepackage{multirow}%
\usepackage{amsmath,amssymb,amsfonts}%
\usepackage{amsthm}%
\usepackage{mathrsfs}%
\usepackage[title]{appendix}%
\usepackage{xcolor}%
\usepackage{textcomp}%
\usepackage{manyfoot}%
\usepackage{booktabs}%
\usepackage{algorithm}%
\usepackage{algorithmicx}%
\usepackage{algpseudocode}%
\usepackage{listings}%
\usepackage{CJKutf8}
\usepackage{lineno}
\usepackage{float}
\usepackage{pdfpages}
\usepackage{caption}
\theoremstyle{thmstyleone}%
\newtheorem{theorem}{Theorem}
\theoremstyle{thmstyletwo}%

\theoremstyle{thmstylethree}%

\begin{document}

 	\title[title name]{A Non-compact Positivity-Preserving Numerical Scheme for   Elliptic Differential Equations Based on Mathematical Expectation}
	\author[1]{\fnm{Haoran} \sur{Xu}}\email{20244007005@stu.suda.edu.cn}

\author[1]{\fnm{Kunyang} \sur{Li}}\email{hfyuglu@163.com}

\author*[2]{\fnm{Xingye} \sur{Yue}}\email{xyyue@suda.edu.cn}

\affil*[1]{\orgdiv{School of Mathematical Sciences}, \orgname{Soochow University}, \orgaddress{\city{Suzhou}, \postcode{215006}, \country{China}}}

\affil[2]{\orgdiv{Center for Financial Engineering , School of Mathematical Sciences}, \orgname{Soochow University}, \orgaddress{\city{Suzhou}, \postcode{215006}, \country{China}}}


\abstract{
	We propose a novel non-compact, positivity-preserving scheme for linear non-divergence form elliptic equations. Based on the Feynman--Kac formula, the solution is represented as a conditional expectation associated with a diffusion process.Instead of using compact Markov chain approximations, we construct a wide-stencil scheme by approximating the expectation with carefully designed transition probabilities, ensuring both consistency and positivity preservation. The method is effective for anisotropic diffusion problems with mixed derivatives, where classical schemes typically fail unless the covariance matrix is diagonally dominant.
	
	A key feature of the proposed framework is its robust treatment of boundary conditions. For Dirichlet boundaries, we introduce a quadtree-based non-uniform stopping-time strategy, achieving $O(h)$ accuracy. For Neumann boundaries, a discrete specular reflection mechanism is employed, yielding $O(h^{1/2})$ convergence. Periodic boundaries are handled through modular wrapping, also achieving $O(h)$ accuracy.
	
	The resulting schemes are unconditionally stable and positivity-preserving due to their probabilistic structure. Numerical experiments confirm the theoretical convergence rates under all boundary conditions considered.
}

\keywords{Expectation-based numerical schemes, Feynman-Kac formula, positivity-preserving property}



\maketitle

\section{Introduction}

Elliptic partial differential equations play a fundamental role in modeling
steady-state phenomena in physics, engineering, and finance. In many
applications, the quantities of interest—such as mass densities,
probability distributions, and option prices—are intrinsically
nonnegative. Preserving positivity in numerical approximations is therefore
essential for physical consistency, numerical stability, and computational
reliability. Positivity is closely related to several fundamental
properties, including monotonicity, the comparison principle, and the
maximum principle. For linear problems, positivity and monotonicity are
equivalent, while for nonlinear problems, positivity is a necessary but not
sufficient condition for monotonicity. At the discrete level, monotone
schemes typically ensure stability in the maximum norm. For fully nonlinear
equations, the theory of viscosity solutions provides the appropriate
analytical framework, and a classical result states that consistent,
stable, and monotone schemes converge to the unique viscosity solution
\cite{bib1}.

In one spatial dimension, standard discretization methods, such as finite
differences, finite elements, and finite volumes, can often be adjusted to
preserve positivity through suitable mesh constraints. In multiple
dimensions, however, the situation becomes significantly more challenging,
particularly for anisotropic diffusion problems involving mixed second-order
derivatives. It is well known that, for general two-dimensional linear
anisotropic problems, no linear compact nine-point scheme can simultaneously
achieve consistency and positivity preservation unless the diffusion matrix
is diagonally dominant \cite{bib11,bib3,bib13,bib12}. This limitation has
motivated two main research directions.

The first approach is based on nonlinear compact schemes, where nonlinearity
is introduced to enforce positivity while maintaining a compact stencil.
Such methods have been developed in the finite difference, finite element,
and finite volume settings \cite{bib13,bib10,bib14,bib17,bib18,bib7,bib20,bib21}.
Although effective, these methods often involve increased computational
complexity and may require problem-dependent design.

The second approach is to construct non-compact linear schemes. By relaxing
the compactness requirement, one can design linear schemes that are both
consistent and positivity-preserving. Representative examples include the
BZ method \cite{bib2} and semi-Lagrangian methods \cite{bib4}. These methods
typically decompose the diffusion operator into directional derivatives
along non-grid-aligned directions, thereby eliminating mixed derivative
terms. However, this leads to wide stencils and requires interpolation at
off-grid points. A major challenge in this framework is the treatment of
boundary conditions, as the extended stencils frequently exit the
computational domain, often leading to extrapolation and potential loss of
accuracy and monotonicity. While specialized techniques, such as local
coordinate transformations, can alleviate these issues in certain
applications \cite{bib24}, such approaches are generally problem-specific
and lack robustness.

In addition, compact positivity-preserving schemes have been developed for
specific classes of problems. For instance, Xu et al.\ \cite{bib16} proposed
a linear compact scheme for degenerate Fokker--Planck equations by
exploiting structural properties of the equation. However, such approaches
are not applicable to general anisotropic diffusion problems without
diagonal dominance.

An alternative perspective is provided by probabilistic methods. The
Feynman--Kac formula represents the solution of a PDE as a conditional
expectation associated with a diffusion process, which naturally preserves
positivity. Classical tree methods in computational finance \cite{bib5} and
Markov chain approximations for stochastic control \cite{bib9} are typical
examples. However, these methods often lead to compact discretizations that
inherit the same diagonal dominance restrictions in multi-dimensional
anisotropic settings. Other works have employed probabilistic
representations to solve high-dimensional nonlinear PDEs
\cite{bib6,bib8,bib22}, but their focus is typically different from the
construction of monotone finite difference schemes.

\medskip
\noindent\textbf{Our contribution.}
In this work, we develop a novel non-compact, positivity-preserving
numerical framework for linear elliptic equations in non-divergence form.
Our approach is inspired by probabilistic representations based on the
Feynman--Kac formula and extends our previous work on parabolic problems
\cite{bib23,bib25} to the elliptic setting.

Instead of directly discretizing the first-exit-time representation, we
approximate the elliptic solution via the long-time limit of an associated
parabolic problem. Based on this interpretation, we construct a
time-marching scheme over short time intervals and approximate the
resulting conditional expectation using a wide-stencil discretization with
carefully designed transition probabilities. These probabilities are
constructed to satisfy moment-matching conditions, ensuring consistency
with the underlying diffusion process while preserving positivity.

A key feature of the proposed framework is its unified and robust treatment
of boundary conditions. For Dirichlet boundaries, we introduce a
quadtree-based non-uniform stopping-time strategy that achieves an
$L^\infty$ error of order $O(h)$. For Neumann boundaries, we employ a
discrete specular reflection mechanism, leading to an $O(h^{1/2})$
convergence rate. Periodic boundary conditions are handled through modular
wrapping of the diffusion process, yielding $O(h)$ accuracy. In all cases,
the resulting schemes are unconditionally stable and positivity-preserving
as a direct consequence of their probabilistic structure.

The remainder of the paper is organized as follows. In
Section~\ref{sec2}, we present the numerical schemes and their
implementation for different types of boundary conditions, together with
theoretical analysis of consistency, stability, and positivity preservation.
Section~\ref{sec:numerical-results} provides numerical experiments that
validate the convergence rates and demonstrate the effectiveness of the
proposed methods. Finally, Section~\ref{sec4} concludes the paper.
\section{Numerical Scheme}
\label{sec2}
\subsection{Numerical Scheme for Boundary Conditions of the First Type}

Elliptic equations admit a probabilistic interpretation through the
first-exit-time representation, which naturally leads to Monte Carlo-type
algorithms. While such representations are well suited for high-dimensional
problems, they are generally not appropriate for constructing accurate
deterministic schemes in low dimensions.

Following the approach developed in \cite{bib25}, we instead approximate
the elliptic problem via the long-time limit of an associated parabolic
equation. Specifically, we consider the corresponding time-dependent
problem and construct a time-marching scheme based on the Feynman--Kac
representation over short time intervals. This approach avoids the direct
discretization of the exit-time formulation and leads to a tractable
local numerical scheme. This interpretation is valid under standard
assumptions ensuring the convergence of the parabolic solution to the
elliptic steady state.

The resulting probabilistic formulation provides the foundation for the
positivity-preserving wide-stencil discretization introduced below.

\subsubsection{Parabolic Partial Differential Equations}

Consider the backward parabolic problem:

\begin{equation}
	\label{eq:parabolic}
	\begin{cases}
		\partial_t f + \dfrac{1}{2} \mathrm{Tr}\left(AA^\top \nabla^2 f\right) + B \cdot \nabla f - r(x, y, t) f = 0, & (x, y) \in \Omega, \quad t \in [0, T), \\
		f|_{t = T} = \varphi(x, y), \\
		f|_{\partial \Omega} = f_\partial(x, y, t)|_{\partial \Omega},
	\end{cases}
\end{equation}

where \(\Omega = (x_0, x_{M_1}) \times (y_0, y_{M_2})\), 
\[
AA^\top = \begin{pmatrix} 
	\sigma_1^2 & \rho \sigma_1 \sigma_2 \\ 
	\rho \sigma_1 \sigma_2 & \sigma_2^2 
\end{pmatrix}, \quad 
B = (b_1, b_2),
\]
and the coefficients \(\rho, \sigma_1, \sigma_2, r, b_1, b_2\) are continuous in \((x, y, t)\), with \(\rho \in [-1, 1]\) and \(r \geq 0\) for all \((x, y) \in \Omega\), \(t \in [0, T]\). The Hessian and gradient are defined as
\[
D^2 f = \begin{pmatrix} 
	\partial_{xx} f & \partial_{xy} f \\ 
	\partial_{yx} f & \partial_{yy} f 
\end{pmatrix}, \quad 
\nabla f = \begin{pmatrix} 
	\partial_x f \\ 
	\partial_y f 
\end{pmatrix}.
\]

A particular choice for \(A\) is
\[
A = \begin{pmatrix} 
	\sigma_1 \cos\theta & \sigma_1 \sin\theta \\ 
	\sigma_2 \sin\theta & \sigma_2 \cos\theta 
\end{pmatrix}, \quad 
\theta = \frac{\arcsin \rho}{2} \in \left[-\frac{\pi}{4}, \frac{\pi}{4}\right].
\]

A time reversal \(\bar{t} = T - t\) converts \eqref{eq:parabolic} into a forward initial--boundary value problem.

We discretize the domain \(\Omega \times [0, T]\) using a spatial grid with steps \(h_1, h_2\) and a time step \(\Delta t\):
\begin{align*}
	x_i &= x_0 + i h_1, \quad i = 0, \dots, M_1, \quad h_1 = \frac{x_{M_1} - x_0}{M_1}, \\
	y_j &= y_0 + j h_2, \quad j = 0, \dots, M_2, \quad h_2 = \frac{y_{M_2} - y_0}{M_2}, \\
	t_n &= n \Delta t, \quad n = 0, \dots, N, \quad \Delta t = \frac{T}{N}.
\end{align*}
Let \(f_{h,i,j}^n \approx f(x_i, y_j, t_n)\) denote the numerical solution.

From the Feynman--Kac formula, for \(t_n \leq t \leq t_{n+1}\),
\begin{equation}
	\label{eq:feynman-kac}
	f(x_i, y_j, t_n) = \mathbb{E} \left[ \exp\left( -\int_{t_n}^{\tau} r(X_s, s) \, ds \right) u(X_\tau, Y_\tau, \tau) \,\middle|\, X_n = x_i, Y_n = y_j \right],
\end{equation}
where \((X_s, Y_s)\) satisfies
\begin{equation}
	\label{eq:sde}
	\begin{cases}
		dX(s) = b_1 ds + \sigma_1 \cos\theta \, dW_1(s) + \sigma_1 \sin\theta \, dW_2(s), \\
		dY(s) = b_2 ds + \sigma_2 \sin\theta \, dW_1(s) + \sigma_2 \cos\theta \, dW_2(s),
	\end{cases}
	\quad t_n < s \leq t_{n+1},
\end{equation}
with \(X_n = x_i\), \(Y_n = y_j\), and \(W_1, W_2\) being independent Brownian motions. The stopping time is
\[
\tau = t_{n+1} \wedge \inf \{ s : (X_s, Y_s) \notin \Omega \},
\]
and the payoff function is
\[
u(X_\tau, Y_\tau, \tau) = 
\begin{cases}
	f(X_{n+1}, Y_{n+1}, t_{n+1}), & \tau = t_{n+1}, \\
	f_\partial(X_\tau, Y_\tau, \tau), & \tau < t_{n+1}.
\end{cases}
\]

A discrete approximation of \((X_t, Y_t)\) over \([t_n, t]\) is given by:
\begin{align*}
	X_t &\approx x_i + b_1(x_i, y_j, t_n) \sqrt{t - t_n} + \sigma_1 \cos\theta \cdot \sqrt{t - t_n} \, \xi_1 + \sigma_1 \sin\theta \cdot \sqrt{t - t_n} \, \xi_2, \\
	Y_t &\approx y_j + b_2(x_i, y_j, t_n) \sqrt{t - t_n} + \sigma_2 \sin\theta \cdot \sqrt{t - t_n} \, \xi_1 + \sigma_2 \cos\theta \cdot \sqrt{t - t_n} \, \xi_2,
\end{align*}
where \(\xi_1, \xi_2 \sim \mathcal{N}(0,1)\). These are approximated by discrete random variables:
\[
\xi_1^h, \xi_2^h = 
\begin{cases}
	-1, & \text{probability } 1/2, \\
	+1, & \text{probability } 1/2,
\end{cases}
\]
matching the first three moments of the standard normal distribution.

Introduce the notation:
\begin{align*}
	b_{k,i,j}^n &= b_k(x_i, y_j, t_n), \quad 
	\sigma_{k,i,j}^n = \sigma_k(x_i, y_j, t_n), \quad 
	\theta_{i,j}^n = \theta(x_i, y_j, t_n), \\
	\alpha_{i,j}^n &= \sin\theta_{i,j}^n + \cos\theta_{i,j}^n, \quad 
	\beta_{i,j}^n = \sin\theta_{i,j}^n - \cos\theta_{i,j}^n, \\
	r_{i,j}^n &= r(x_i, y_j, t_n), \quad 
	\rho_{i,j}^n = \rho(x_i, y_j, t_n).
\end{align*}
Note that \(\alpha_{i,j}^n \in [0, \sqrt{2}]\), \(\beta_{i,j}^n \in [-\sqrt{2}, 0]\), and
\[
(\alpha_{i,j}^n)^2 + (\beta_{i,j}^n)^2 = 2, \quad
(\alpha_{i,j}^n)^2 - (\beta_{i,j}^n)^2 = 2\rho_{i,j}^n.
\]

The discrete process \((X_t^h, Y_t^h)\) follows four equiprobable branches:
\[
(X_t^h,Y_t^h)=
\begin{cases}
	\big(x_i + b_{1,i,j}^n (t-t_n) + \alpha_{i,j}^n\sigma_{1,i,j}^n\sqrt{t-t_n},\ 
	y_j + b_{2,i,j}^n (t-t_n) + \alpha_{i,j}^n\sigma_{2,i,j}^n\sqrt{t-t_n}\big),\\[4pt]
	\big(x_i + b_{1,i,j}^n (t-t_n) - \beta_{i,j}^n\sigma_{1,i,j}^n\sqrt{t-t_n},\ 
	y_j + b_{2,i,j}^n (t-t_n) + \beta_{i,j}^n\sigma_{2,i,j}^n\sqrt{t-t_n}\big),\\[4pt]
	\big(x_i + b_{1,i,j}^n (t-t_n) - \alpha_{i,j}^n\sigma_{1,i,j}^n\sqrt{t-t_n},\ 
	y_j + b_{2,i,j}^n (t-t_n) - \alpha_{i,j}^n\sigma_{2,i,j}^n\sqrt{t-t_n}\big),\\[4pt]
	\big(x_i + b_{1,i,j}^n (t-t_n) + \beta_{i,j}^n\sigma_{1,i,j}^n\sqrt{t-t_n},\ 
	y_j + b_{2,i,j}^n (t-t_n) - \beta_{i,j}^n\sigma_{2,i,j}^n\sqrt{t-t_n}\big).
\end{cases}
\]
where \(\Delta t = t - t_n\), and coefficient evaluations at \((x_i, y_j, t_n)\) are omitted for brevity.

The discrete stopping time is
\[
\tau^h = t_{n+1} \wedge \inf \left\{ s : (X_s^h, Y_s^h) \notin \Omega \right\}.
\]
Let \(\hat{\tau}_k = \tau_k - t_n\) for \(k = 1, \dots, 4\). A naive numerical scheme would be:
\begin{equation}\label{eq:naive-approx}
	f_{h,i,j}^n
	= \mathbb{E}^h\!\Big[\frac{u(X_{\tau^h}^{h},Y_{\tau^h}^h,\tau^h)}{1+r_{i,j}^n\hat\tau^h}\Big]
	= \sum_{k=1}^4\frac{1}{4(1+r_{i,j}^n\hat\tau_k)}\,u(X_{\tau_k}^{h,k},Y_{\tau_k}^{h,k},\tau_k),
\end{equation}
which may suffer boundary-induced loss of accuracy. To restore local consistency we introduce branch-dependent probabilities $\omega_k$ and adopt the scheme
\begin{equation}
	\label{eq:weighted-scheme}
	f_{h,i,j}^n = \sum_{k=1}^4 \frac{\omega_k}{1 + r_{i,j}^n \hat{\tau}_k} u(X_{\tau_k}^{h,k}, Y_{\tau_k}^{h,k}, \tau_k),
\end{equation}
where one convenient closed form for $\{\omega_k\}$ that enforces the required moment-matching conditions is
\begin{equation}
	\label{eq:weights}
	\begin{cases}
		\omega_1 = \dfrac{\sqrt{\hat{\tau}_2 \hat{\tau}_3 \hat{\tau}_4}}{\left(\sqrt{\hat{\tau}_1} + \sqrt{\hat{\tau}_3}\right)\left(\sqrt{\hat{\tau}_1 \hat{\tau}_3} + \sqrt{\hat{\tau}_2 \hat{\tau}_4}\right)}, \\
		\omega_2 = \dfrac{\sqrt{\hat{\tau}_1 \hat{\tau}_3 \hat{\tau}_4}}{\left(\sqrt{\hat{\tau}_2} + \sqrt{\hat{\tau}_4}\right)\left(\sqrt{\hat{\tau}_1 \hat{\tau}_3} + \sqrt{\hat{\tau}_2 \hat{\tau}_4}\right)}, \\
		\omega_3 = \dfrac{\sqrt{\hat{\tau}_1 \hat{\tau}_2 \hat{\tau}_4}}{\left(\sqrt{\hat{\tau}_1} + \sqrt{\hat{\tau}_3}\right)\left(\sqrt{\hat{\tau}_1 \hat{\tau}_3} + \sqrt{\hat{\tau}_2 \hat{\tau}_4}\right)}, \\
		\omega_4 = \dfrac{\sqrt{\hat{\tau}_1 \hat{\tau}_2 \hat{\tau}_3}}{\left(\sqrt{\hat{\tau}_2} + \sqrt{\hat{\tau}_4}\right)\left(\sqrt{\hat{\tau}_1 \hat{\tau}_3} + \sqrt{\hat{\tau}_2 \hat{\tau}_4}\right)}.
	\end{cases}
\end{equation}
A direct algebraic check shows $\omega_k\ge0$ and $\sum_{k=1}^4\omega_k=1$ ; combined with a positivity-preserving interpolation for off-grid evaluations of $u$, the scheme preserves positivity. Since the scheme may require values at non-grid points at \(t_{n+1}\), we introduce a positivity-preserving bilinear interpolation operator \(\mathcal{L}^{(1)}\). The final numerical scheme becomes:
\begin{equation}
	\label{eq:final-scheme}
	f_{h,i,j}^n = \sum_{k=1}^4 \frac{\omega_k}{1 + r_{i,j}^n \hat{\tau}_k} U(X_{\tau_k}^{h,k}, Y_{\tau_k}^{h,k}, \tau_k),
\end{equation}
where
\begin{equation}
	U(X_{\tau_k}^{h,k}, Y_{\tau_k}^{h,k}, \tau_k) = 
	\begin{cases}
		\mathcal{L}^{(1)} f_h(X_{\tau_k}^{h,k}, Y_{\tau_k}^{h,k}, t_{n+1}), & (X_{\tau_k}^{h,k}, Y_{\tau_k}^{h,k}) \in \Omega, \\
		f_\partial(X_{\tau_k}^{h,k}, Y_{\tau_k}^{h,k}, \tau_k), & (X_{\tau_k}^{h,k}, Y_{\tau_k}^{h,k}) \in \partial \Omega.
	\end{cases}
\end{equation}

If the stopping occurs at \(t_{n+1}\) without boundary contact, \(U\) is computed via linear interpolation from the four nearest spatial grid points.

\subsubsection{Elliptic Partial Differential Equations}

Consider the elliptic partial differential equation:
\begin{equation}
	\label{eq:elliptic}
	\begin{cases}
		\dfrac{1}{2} \mathrm{Tr}\left(AA^\top D^2f\right) + B \cdot \nabla f - r(x,y)f + q(x,y) = 0, & (x,y) \in \Omega, \\
		f|_{\partial\Omega} = f_{\partial}(x,y),
	\end{cases}
\end{equation}

where the domain is $\Omega = (x_0, x_{M_1}) \times (y_0, y_{M_2})$, with coefficient matrices
\[
AA^\top = \begin{pmatrix} 
	\sigma_1^2 & \rho \sigma_1 \sigma_2 \\ 
	\rho \sigma_1 \sigma_2 & \sigma_2^2 
\end{pmatrix}, \quad 
B = (b_1, b_2),
\]
and the coefficients $\rho, \sigma_1, \sigma_2, r, b_1, b_2$ are continuous functions of $(x,y)$ satisfying
\[
\inf_{(x,y) \in \Omega} \sigma_1^2 > 0, \quad \inf_{(x,y) \in \Omega} \sigma_2^2 > 0, \quad \rho \in [-1, 1], \quad r \geq 0 \quad \forall (x,y) \in \Omega.
\]

we choose the parametrization:
\[
A = \begin{pmatrix} 
	\sigma_1 \cos\theta & \sigma_1 \sin\theta \\ 
	\sigma_2 \sin\theta & \sigma_2 \cos\theta 
\end{pmatrix}, \quad 
\theta = \frac{\arcsin \rho}{2} \in \left[-\frac{\pi}{4}, \frac{\pi}{4}\right].
\]
The differential operators are defined as $D^2f = \begin{pmatrix} 
	\partial_{xx} f & \partial_{xy} f \\ 
	\partial_{yx} f & \partial_{yy} f 
\end{pmatrix}$ and $\nabla f = \begin{pmatrix} 
	\partial_x f \\ 
	\partial_y f 
\end{pmatrix}$.

We discretize the spatial domain using uniform grids:
\begin{align*}
	x_i &= x_0 + i \cdot h_1, \quad 0 \leq i \leq M_1, \quad h_1 = \frac{x_{M_1} - x_0}{M_1}, \\
	y_j &= y_0 + j \cdot h_2, \quad 0 \leq j \leq M_2, \quad h_2 = \frac{y_{M_2} - y_0}{M_2}.
\end{align*}
The numerical solution at grid point $(x_i, y_j)$ is denoted by $f_{h,i,j}$.

To ensure the well-posedness of our numerical scheme, we assume there exists a constant $r_0 > 0$ such that $r(x,y) > r_0$ for all $(x,y) \in \Omega$. This assumption is justified by the following transformation: if $r \geq 0$ and vanishes at some points, we define $f(x,y) = (2 - e^{-a(x - x_0)})w(x,y)$ and substitute into equation (\ref{eq:elliptic}) to obtain:
\begin{equation*}
	\begin{cases}
		\dfrac{1}{2} \mathrm{Tr}(\widetilde{A} \widetilde{A}^\top D^2w) + \widetilde{B} \cdot \nabla w - \widetilde{r}(x,y)w + q(x,y) = 0, & (x,y) \in \Omega, \\
		w|_{\partial\Omega} = \dfrac{f_{\partial}(x,y)}{2 - e^{-a(x - x_0)}},
	\end{cases}
\end{equation*}
where
\begin{align*}
	\widetilde{r} &= \dfrac{\sigma_1^2}{2} a^2 e^{-a(x - x_0)} - b_1 a e^{-a(x - x_0)} + (2 - e^{-a(x - x_0)})r, \\
	\widetilde{A} \widetilde{A}^\top &= \begin{pmatrix}
		\sigma_1^2 (2 - e^{-a(x - x_0)}) & \rho \sigma_1 \sigma_2 (2 - e^{-a(x - x_0)}) \\
		\rho \sigma_1 \sigma_2 (2 - e^{-a(x - x_0)}) & \sigma_2^2 (2 - e^{-a(x - x_0)})
	\end{pmatrix}, \\
	\widetilde{B} &= \begin{pmatrix}
		\sigma_1^2 a e^{-a(x - x_0)} + (2 - e^{-a(x - x_0)})b_1 \\
		(2 - e^{-a(x - x_0)})b_2 + \rho \sigma_1 \sigma_2 a e^{-a(x - x_0)}
	\end{pmatrix}.
\end{align*}
By choosing $a = \dfrac{2(\sup_{(x,y) \in \Omega} |b(x,y)| + 1)}{\inf_{(x,y) \in \Omega} \sigma_1^2}$, we ensure $\inf_{(x,y) \in \Omega} \widetilde{r} > 0$.

Since elliptic equations represent steady states of parabolic equations, we note that $\partial_t f = 0$ and for any $T > 0$, $f|_{t = T} = f(x,y)$, where $f(x,y)$ solves equation (\ref{eq:elliptic}). This leads to the parabolic formulation:
\begin{equation}
	\label{eq:parabolic-reformulation}
	\begin{cases}
		\partial_t f + \dfrac{1}{2} \mathrm{Tr}(AA^\top D^2f) + B \cdot \nabla f - r(x,y)f + q(x,y) = 0, & (x,y) \in \Omega, \\
		f|_{t = T} = f(x,y), \\
		f|_{\partial\Omega} = f_{\partial}(x,y).
	\end{cases}
\end{equation}

Adapting the parabolic scheme to the spatial context, we derive the numerical scheme for elliptic equations:
\begin{equation} 
	\label{eq:elliptic-scheme}
	f_{h,i,j} = \sum_{k=1}^4 \left( \dfrac{\omega_k}{1 + r_{i,j}{s}_k} U\left(X_{s_k}^{h,k}, Y_{s_k}^{h,k}\right) + q(x_i, y_j) \cdot \omega_k {s}_k \right),
\end{equation}
where the utility function is defined as:
\begin{equation*}
	U\left(X_{s_k}^{h,k}, Y_{s_k}^{h,k}\right) = 
	\begin{cases}
		\mathcal{L}^{(1)} f_h\left(X_{h}^{h,k}, Y_{h}^{h,k}\right), & \left(X_{h}^{h,k}, Y_{h}^{h,k}\right) \in \Omega, \\
		f_{\partial}\left(X_{s_k}^{h,k}, Y_{s_k}^{h,k}\right), & \left(X_{s_k}^{h,k}, Y_{s_k}^{h,k}\right) \in \partial \Omega.
	\end{cases}
\end{equation*}

The  hitting time ${s}_k$ are defined as follows:
\begin{equation}
	\label{eq:exit-times}
	\begin{cases}
		{s}_1 = h \wedge \inf \left\{ s : \left( x_i + b_{1,i,j} s + \alpha_{i,j} \sigma_{1,i,j} \sqrt{s}, y_j + b_{2,i,j} s + \alpha_{i,j} \sigma_{2,i,j} \sqrt{s} \right) \notin \Omega, 0 \leq s \leq h \right\}, \\
		{s}_2 = h \wedge \inf \left\{ s : \left( x_i + b_{1,i,j} s - \beta_{i,j} \sigma_{1,i,j} \sqrt{s}, y_j + b_{2,i,j} s + \beta_{i,j} \sigma_{2,i,j} \sqrt{s} \right) \notin \Omega, 0 \leq s \leq h \right\}, \\
		{s}_3 = h \wedge \inf \left\{ s : \left( x_i + b_{1,i,j} s - \alpha_{i,j} \sigma_{1,i,j} \sqrt{s}, y_j + b_{2,i,j} s - \alpha_{i,j} \sigma_{2,i,j} \sqrt{s} \right) \notin \Omega, 0 \leq s \leq h \right\}, \\
		{s}_4 = h \wedge \inf \left\{ s : \left( x_i + b_{1,i,j} s + \beta_{i,j} \sigma_{1,i,j} \sqrt{s}, y_j + b_{2,i,j} s - \beta_{i,j} \sigma_{2,i,j} \sqrt{s} \right) \notin \Omega, 0 \leq s \leq h \right\},
	\end{cases}
\end{equation}
with $h = \min(h_1, h_2)$.

The corresponding four discrete spatial branches are:
\begin{equation}
	\label{eq:spatial-branches}
	\begin{cases}
		\left(X_{s_1}^{h,k}, Y_{s_1}^{h,k}\right) = \left( x_i + b_{1,i,j} {s}_1 + \alpha_{i,j} \sigma_{1,i,j} \sqrt{{s}_1}, y_j + b_{2,i,j} {s}_1 + \alpha_{i,j} \sigma_{2,i,j} \sqrt{{s}_1} \right), \\
		\left(X_{s_2}^{h,k}, Y_{s_2}^{h,k}\right) = \left( x_i + b_{1,i,j} {s}_2 - \beta_{i,j} \sigma_{1,i,j} \sqrt{{s}_2}, y_j + b_{2,i,j}{s}_2 + \beta_{i,j} \sigma_{2,i,j} \sqrt{{s}_2} \right), \\
		\left(X_{s_3}^{h,k}, Y_{s_3}^{h,k}\right) = \left( x_i + b_{1,i,j} {s}_3 - \alpha_{i,j} \sigma_{1,i,j} \sqrt{{s}_3}, y_j + b_{2,i,j} {s}_3 - \alpha_{i,j} \sigma_{2,i,j} \sqrt{{s}_3} \right), \\
		\left(X_{s_4}^{h,k}, Y_{s_4}^{h,k}\right) = \left( x_i + b_{1,i,j} {s}_4 + \beta_{i,j} \sigma_{1,i,j} \sqrt{{s}_4}, y_j + b_{2,i,j} {s}_4 - \beta_{i,j} \sigma_{2,i,j} \sqrt{{s}_4} \right).
	\end{cases}
\end{equation}

The coefficients are evaluated spatially:
\begin{align*}
	b_{k,i,j} &= b_k(x_i, y_j), \quad 
	\sigma_{k,i,j} = \sigma_k(x_i, y_j), \quad 
	\theta_{i,j} = \theta(x_i, y_j), \\
	\alpha_{i,j} &= \sin(\theta_{i,j}) + \cos(\theta_{i,j}), \quad 
	\beta_{i,j} = \sin(\theta_{i,j}) - \cos(\theta_{i,j}), \\
	r_{i,j} &= r(x_i, y_j), \quad 
	\rho_{i,j} = \rho(x_i, y_j),
\end{align*}
with $\alpha_{i,j} \in [0, \sqrt{2}]$, $\beta_{i,j} \in [-\sqrt{2}, 0]$, and satisfying the identities:
\[
(\alpha_{i,j})^2 + (\beta_{i,j})^2 = 2, \quad (\alpha_{i,j})^2 - (\beta_{i,j})^2 = 2\rho_{i,j}.
\]

The branching probabilities are defined as:
\begin{equation}
	\label{eq:spatial-weights}
	\begin{cases}
		\omega_1 = \dfrac{\sqrt{{s}_2 {s}_3 {s}_4}}{\left(\sqrt{{s}_1} + \sqrt{{s}_3}\right)\left(\sqrt{{s}_1 {s}_3} + \sqrt{{s}_2 {s}_4}\right)}, \\
		
		\omega_2 = \dfrac{\sqrt{{s}_1{s}_3 {s}_4}}{\left(\sqrt{{s}_2} + \sqrt{{s}_4}\right)\left(\sqrt{{s}_1 {s}_3} + \sqrt{{s}_2 {s}_4}\right)}, \\
		
		\omega_3 = \dfrac{\sqrt{{s}_1 {s}_2 {s}_4}}{\left(\sqrt{{s}_1} + \sqrt{{s}_3}\right)\left(\sqrt{{s}_1 {s}_3} + \sqrt{{s}_2 {s}_4}\right)}, \\
		
		\omega_4 = \dfrac{\sqrt{{s}_1 {s}_2 {s}_3}}{\left(\sqrt{{s}_2} + \sqrt{{s}_4}\right)\left(\sqrt{{s}_1{s}_3} + \sqrt{{s}_2 {s}_4}\right)}.
	\end{cases}
\end{equation}

This constitutes an implicit scheme. The detailed algorithmic implementation is as follows:

\begin{algorithm}
	\caption{Implementation of Scheme (\ref{eq:elliptic-scheme})}
	\label{alg:scheme-elliptic-dirichlet}
	\begin{algorithmic}[1]
		\Require Interior grid points $(x_i, y_j)$, $i=1,\dots,M_1-1$, $j=1,\dots,M_2-1$
		\Ensure Coefficient matrix $T$ and right-hand side vector $b$
		
		\State Initialize $T \gets \mathbf{0}_{(M_1-1)(M_2-1) \times (M_1-1)(M_2-1)}$
		\State Initialize $b \gets \mathbf{0}_{(M_1-1)(M_2-1)}$
		
		\For{$i = 1$ to $M_1-1$}
		\For{$j = 1$ to $M_2-1$}
		\State $idx \gets (i-1)(M_2-1) + j$ \Comment{Global index}
		\State Compute  $s_k$ via \eqref{eq:exit-times}
		\State Compute $\omega_k$ via \eqref{eq:spatial-weights}
		\State Compute branch positions $(X_{s_k}^{h,k}, Y_{s_k}^{h,k})$ via \eqref{eq:spatial-branches}
		
		\For{$k = 1$ to $4$}
		\If{$(X_{s_k}^{h,k}, Y_{s_k}^{h,k}) \in \Omega$}
		\State Find interpolation stencil for $\mathcal{L}^{(1)} f_h$
		\For{each grid point $(i_m, j_n)$ in stencil}
		\State $idx_{mn} \gets i_m (M_2+1) + j_n$
		\State $l_{mn} \gets $ interpolation weight
		\State $T(idx, idx_{mn}) \gets T(idx, idx_{mn}) - \dfrac{\omega_k l_{mn}}{1 + r_{i,j} h}$
		\EndFor
		\Else{$(X_{s_k}^{h,k}, Y_{s_k}^{h,k}) \in \partial\Omega$}
		\State $b(idx) \gets b(idx) + \dfrac{\omega_k}{1 + r_{i,j} s_k} f_\partial(X_{s_k}^{h,k}, Y_{s_k}^{h,k})$
		\EndIf
		\EndFor
		
		\State $b(idx) \gets b(idx) + q(x_i, y_j) \sum_{k=1}^4 \omega_k s_k$ \Comment{Source term}
		\State $T(idx, idx) \gets T(idx, idx) + 1$ \Comment{Diagonal entry}
		\EndFor
		\EndFor
		
		\State Solve linear system $T F = b$ for $F$ \Comment{$F$ contains interior values}
	\end{algorithmic}
\end{algorithm}
\newpage
Due to the existence of boundary-hitting points and $r_{i,j} > 0$, the coefficient matrix $T$ satisfies:
\[
T_{i,i} > 0, \quad T_{i,j} \leq 0, \quad \text{and} \quad \sum_{j} T_{i,j} \geq 0,
\]
with at least one row $p$ satisfying $\sum_{j} T_{p,j} > 0$. Therefore, $T$ is an M-matrix, ensuring the positivity-preserving property of Scheme \eqref{eq:elliptic-scheme}.

Since $T$ is a sparse M-matrix, we can employ iterative methods such as Jacobi, Gauss-Seidel, or successive over-relaxation (SOR) to solve the linear system.
\begin{figure}[htbp]
	\centering
	\includegraphics[width=0.8\textwidth]{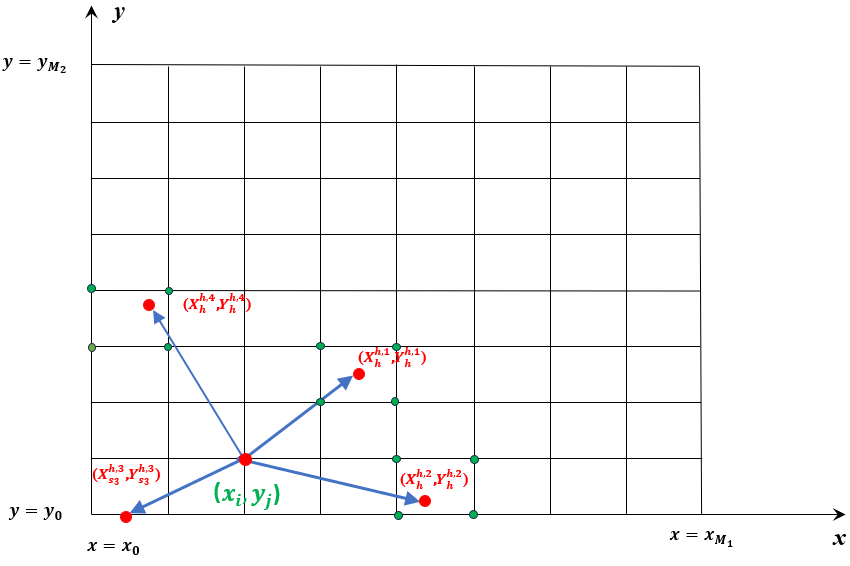}
	\caption{Illustration of the  Algorithm \ref{alg:scheme-elliptic-dirichlet} , where $s_1 = s_2 = s_4 = h$ and $s_3 <h$.}
	\label{fig2}
\end{figure}	
We now analyze the convergence of Algorithm \ref{alg:scheme-elliptic-dirichlet}. Define the maximum norm:
\[
\|f_{h,i,j}\|_\infty = \max_{0 \leq i \leq M_1} \max_{0 \leq j \leq M_2} |f_{h,i,j}|.
\]

\begin{theorem}
	\label{thm:convergence}
	Let $f$ be sufficiently smooth and consider the mesh scaling $h = h_1 = h_2$. 
	Define the pointwise error at grid node $(i,j)$ by
	\[
	e_{i,j} = f_{h,i,j} - f(x_i, y_j),
	\]
	Then the numerical solution produced by scheme~\eqref{alg:scheme-elliptic-dirichlet}satisfies
	\[
	\|e^n\|_\infty\leq O( h).
	\]
\end{theorem}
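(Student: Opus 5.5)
The plan is the standard consistency-plus-stability argument, with stability supplied by the strictly positive discount $r_{i,j}\ge r_0>0$. I write the scheme \eqref{eq:elliptic-scheme} at an interior node as
\[
f_{h,i,j}=\sum_{k=1}^4 \frac{\omega_k}{1+r_{i,j}s_k}\,U_k(f_h)+q_{i,j}\sum_{k=1}^4\omega_k s_k ,
\]
and insert the exact solution $f$ in place of $f_h$. This defines the local truncation error
\[
\tau_{i,j}=f(x_i,y_j)-\sum_k \frac{\omega_k}{1+r_{i,j}s_k}U_k(f)-q_{i,j}\sum_k\omega_k s_k .
\]
Subtracting the two relations gives, for the error,
\[
e_{i,j}=\sum_k \frac{\omega_k}{1+r_{i,j}s_k}\,U_k(e)-\tau_{i,j},
\]
where $U_k(e)$ is either a convex bilinear interpolation of nodal errors (weights $\ge 0$, summing to one) or zero when the branch stops on $\partial\Omega$, since the boundary data are exact.

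\textbf{Step 1 (interior consistency).} If all four branches stop at $s_k=h$, the weights \eqref{eq:spatial-weights} reduce to $\omega_k=1/4$. Taylor expansion of $f$ at $(x_i,y_j)$ along the four branches \eqref{eq:spatial-branches} gives the following.
\begin{itemize}
\item The $\sqrt h$ terms cancel by symmetry.
\item The $h$ terms reproduce $h\big(\tfrac12\mathrm{Tr}(AA^\top D^2f)+B\cdot\nabla f\big)$, using $\alpha^2+\beta^2=2$ and $\alpha^2-\beta^2=2\rho$.
\item The $h^{3/2}$ terms cancel by symmetry of $\pm\alpha,\pm\beta$; the drift cross-terms are of order $h^{3/2}$ but cancel in the same way.
\end{itemize}
The factor $1/(1+r h)$ equals $1-rh+O(h^2)$. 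Using the PDE \eqref{eq:elliptic}, this gives $\tau_{i,j}=O(h^2)$ before interpolation. Bilinear interpolation $\mathcal L^{(1)}$ of a smooth $f$ adds $O(h^2)$ as well, since the grid spacing is $h$. So $\tau_{i,j}=O(h^2)$ at interior nodes.

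\textbf{Step 2 (near-boundary consistency).} If some $s_k<h$, the moment-matching design of \eqref{eq:spatial-weights} is what I would exploit. I would verify algebraically that $\sum_k\omega_k\sqrt{s_k}\,v_k=0$, where $v_k$ are the branch directions ($v_1=-v_3\cdot\sqrt{s_3}/\ldots$ pairing, using the factor $\sqrt{s_1}+\sqrt{s_3}$), and that the second moments are proportional to $\sum_k\omega_k s_k$ times $AA^\top$. The expansion then becomes
\[
\sum_k\omega_k f(\text{branch}_k)=f+\Big(\sum_k\omega_k s_k\Big)\big(\mathcal Lf\big)+O(h^{3/2}),
\]
where $\mathcal L f=\tfrac12\mathrm{Tr}(AA^\top D^2f)+B\cdot\nabla f$ and the drift mismatch $\sum\omega_k s_k b$ is handled similarly. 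Combining this with the $r$ and $q$ terms and the PDE gives $\tau_{i,j}=O(h^{3/2})$, or even only $O(h)$. This holds only in a boundary layer of nodes within distance $O(\sqrt h)$ of $\partial\Omega$.

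\textbf{Step 3 (stability and error bound).} Let $M=\max|e_{i,j}|$ be attained at $(i,j)$. Then
\[
M\le M\sum_k\frac{\omega_k}{1+r_{i,j}s_k}+|\tau_{i,j}| .
\]
A crude bound $\sum_k \omega_k/(1+rs_k)\le 1-c\,\bar s$ with $\bar s=\sum\omega_ks_k$ would give $M\le\max|\tau|/(c\,\bar s)$. For interior nodes this is $O(h^2)/h=O(h)$. Near the boundary, $\bar s$ can be small, which would spoil the bound. For those nodes, the branches with $s_k<h$ contribute exact boundary values, so the effective weight on unknowns is at most $1-\min_k\omega_k\mathbf 1_{s_k<h}$.

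The \textbf{main obstacle} is exactly this balance between a small $\bar s$ and the loss of mass to the boundary in the boundary layer. To handle it I would build a discrete barrier, comparing $e$ with $C h\,\phi$ using the M-matrix structure of $T$ and a discrete comparison principle. For the barrier I would choose $\phi=\Phi(x,y)$ with $\mathcal L\Phi-r\Phi\le-1$ (possible since $r\ge r_0$) plus a boundary-layer correction term of size $O(h)$ near $\partial\Omega$. Here I would rely on the quadtree stopping-time construction mentioned in the introduction: the ratio $\tau_{i,j}/\bar s$ stays $O(h)$ uniformly, because the weights ensure $\bar s\gtrsim$ the truncation of the non-stopped moments. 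Applying the discrete comparison principle yields $\|e\|_\infty\le Ch$.
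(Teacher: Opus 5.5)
Your Steps 1 and 2 carry out the same consistency analysis as the paper's Case 1 and Case 2: symmetric cancellation gives $O(h^2)$ when all $s_k=h$, and moment matching by the weights \eqref{eq:spatial-weights} gives $O(h^{3/2})$ otherwise.

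The gap is Step 3, which is not actually a proof. You defer to a discrete barrier that is never constructed. Its justification appeals to a ``quadtree stopping-time construction'' that is not part of the scheme; here the $s_k$ are just the exact hitting times \eqref{eq:exit-times} of the branch curves. Its key input, $\tau_{i,j}/\bar s=O(h)$ uniformly, is false. Take $s_1=s_2=s_4=h$ and $\sqrt{s_3}=c<\sqrt h$. Then $\omega_1=\sqrt h\,c/(\sqrt h+c)^2$, $\omega_3=h/(\sqrt h+c)^2$ and $\bar s=2hc/(\sqrt h+c)$. The uncancelled cubic moment along the $(1,3)$ direction is $\omega_1h^{3/2}-\omega_3c^3=hc(\sqrt h-c)/(\sqrt h+c)=\tfrac12(\sqrt h-c)\,\bar s$. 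So generically $\tau_{i,j}\sim\sqrt h\,\bar s$ in the boundary layer, and a barrier driven by $\tau/\bar s$ alone yields only $O(h^{1/2})$.

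The missing idea is that small $\bar s$ and loss of mass to $\partial\Omega$ do not compete. Whenever some $s_k<h$, the stopped branches carry total weight at least $1/4$, however small $\bar s$ is. Write $a,b,c,d$ for $\sqrt{s_1},\dots,\sqrt{s_4}$. Then $\omega_1+\omega_3=bd/(ac+bd)$ and $\omega_2+\omega_4=ac/(ac+bd)$, and within each pair a branch's weight is proportional to its partner's $\sqrt{s}$. A short case check over one, two, three or four stopped branches gives the bound. For example, $\omega_3=h/(\sqrt h+c)^2\ge 1/4$ in the configuration above, and $\omega_3\to1$ as $c\to0$.

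You need the \emph{total} stopped weight here, not $\min_k\omega_k\mathbf 1_{s_k<h}$. That minimum can tend to $0$, for instance when branches $1$ and $2$ both stop and $s_2\to0$.

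Since the error vanishes at stopped endpoints, the two node types give:
\begin{itemize}
\item every node with some $s_k<h$ satisfies $|e_{i,j}|\le\tfrac34\|e\|_\infty+Ch^{3/2}$ (a truncation error of size $Ch$ would already suffice);
\item every other interior node satisfies $|e_{i,j}|\le\|e\|_\infty/(1+r_0h)+Ch^2$.
\end{itemize}
Evaluating at a node where $\|e\|_\infty$ is attained gives $\|e\|_\infty\le\max\{C(1+r_0h)h/r_0,\,4Ch^{3/2}\}=O(h)$. This is exactly the paper's argument. Its inequality $\sum_k\omega_k/(1+r_{i,j}s_k)\le 3/4$ must be read as a sum over the branches that do not stop. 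No barrier function is needed.
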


\begin{proof}
	The numerical error satisfies:
	\begin{align*}
		e_{i,j} &= \sum_{k=1}^4 \left( \dfrac{\omega_k}{1 + r_{i,j} s_k} U\left(X_{s_k}^{h,k}, Y_{s_k}^{h,k}\right) + q(x_i, y_j) \cdot \omega_k s_k \right) - f(x_i, y_j) \\
		&= \sum_{(X_{h}^{h,k}, Y_{h}^{h,k}) \in \Omega} \dfrac{\omega_k}{1 + r_{i,j} h} \mathcal{L}^{(1)} (f_h-f)\left(X_{h}^{h,k}, Y_{h}^{h,k}\right) \\
		&\quad + \sum_{(X_{h}^{h,k}, Y_{h}^{h,k}) \in \Omega} \dfrac{\omega_k}{1 + r_{i,j} h} (\mathcal{L}^{(1)} f - f)\left(X_{h}^{h,k}, Y_{h}^{h,k}\right) \\
		&\quad + \sum_{k=1}^4 \left( \dfrac{\omega_k}{1 + r_{i,j} s_k} f\left(X_{s_k}^{h,k}, Y_{s_k}^{h,k}\right) + q(x_i, y_j) \cdot \omega_k s_k - f(x_i, y_j)\right) .
	\end{align*}
	Thus, the error at any grid point can be expressed as:
	\[
	\begin{aligned}
		e_{i,j} = & \sum_{k=1}^4 \dfrac{\omega_k}{1 + r_{i,j} s_k} e\left(X_{s_k}^{h,k}, Y_{s_k}^{h,k}\right) + I_1 + O\left(h^2\right),
	\end{aligned}
	\]
	where
	\begin{align*}
		I_1 &= \omega_1 (1 - r_{i,j} s_1) f\left(x_i + b_{1,i,j} s_1 + \alpha_{i,j} \sigma_{1,i,j} \sqrt{s_1}, y_j + b_{2,i,j} s_1 + \alpha_{i,j} \sigma_{2,i,j} \sqrt{s_1}\right) \\
		&\quad + \omega_2 (1 - r_{i,j} s_2) f\left(x_i + b_{1,i,j} s_2 - \beta_{i,j} \sigma_{1,i,j} \sqrt{s_2}, y_j + b_{2,i,j} s_2 + \beta_{i,j} \sigma_{2,i,j} \sqrt{s_2}\right) \\
		&\quad + \omega_3 (1 - r_{i,j} s_3) f\left(x_i + b_{1,i,j} s_3 - \alpha_{i,j} \sigma_{1,i,j} \sqrt{s_3}, y_j + b_{2,i,j} s_3 - \alpha_{i,j} \sigma_{2,i,j} \sqrt{s_3}\right) \\
		&\quad + \omega_4 (1 - r_{i,j} s_4) f\left(x_i + b_{1,i,j} s_4 + \beta_{i,j} \sigma_{1,i,j} \sqrt{s_4}, y_j + b_{2,i,j} s_4 - \beta_{i,j} \sigma_{2,i,j} \sqrt{s_4}\right) \\
		&\quad + q(x_i, y_j) \cdot (\omega_1 s_1 + \omega_2 s_2 + \omega_3 s_3 + \omega_4 s_4) - f(x_i, y_j) \\
		&= \left[ A_2 \partial_{xx}^2 f + A_3 \partial_{xy}^2 f + A_4 \partial_{yy}^2 f + A_5 \partial_x f + A_6 \partial_y f + A_7 f + A_1 q \right] \bigg|_{x=x_i, y=y_j} + R,
	\end{align*}
	with coefficients:
	\begin{align*}
		A_1 &= \omega_1 s_1 + \omega_2 s_2 + \omega_3 s_3 + \omega_4 s_4, \\
		A_2 &= \frac{(\sigma_{1,i,j})^2}{2} A_1 + \frac{\rho_{i,j} (\sigma_{1,i,j})^2}{2} (\omega_1 s_1 - \omega_2 s_2 + \omega_3 s_3 - \omega_4 s_4), \\
		A_3 &= \rho_{i,j} \sigma_{1,i,j} \sigma_{2,i,j} A_1 + \sigma_{1,i,j} \sigma_{2,i,j} (\omega_1 s_1 - \omega_2 s_2 + \omega_3 s_3 - \omega_4 s_4), \\
		A_4 &= \frac{(\sigma_{2,i,j})^2}{2} A_1 + \frac{\rho_{i,j} (\sigma_{2,i,j})^2}{2} (\omega_1 s_1 - \omega_2 s_2 + \omega_3 s_3 - \omega_4 s_4), \\
		A_5 &= b_{1,i,j} A_1 + \omega_1 \alpha_{i,j} \sigma_{1,i,j} \sqrt{s_1} - \omega_2 \beta_{i,j} \sigma_{1,i,j} \sqrt{s_2} - \omega_3 \alpha_{i,j} \sigma_{1,i,j} \sqrt{s_3} + \omega_4 \beta_{i,j} \sigma_{1,i,j} \sqrt{s_4}, \\
		A_6 &= b_{2,i,j} A_1 + \omega_1 \alpha_{i,j} \sigma_{2,i,j} \sqrt{s_1} + \omega_2 \beta_{i,j} \sigma_{2,i,j} \sqrt{s_2} - \omega_3 \alpha_{i,j} \sigma_{2,i,j} \sqrt{s_3} - \omega_4 \beta_{i,j} \sigma_{2,i,j} \sqrt{s_4}, \\
		A_7 &= -r_{i,j} A_1.
	\end{align*}
	Here $R = \sum_{k=1}^4 O(\omega_k s_k^{3/2})$ denotes the remainder from the Taylor expansion.
	
	Using the consistency conditions:
	\begin{align*}
		&\omega_1 + \omega_2 + \omega_3 + \omega_4 = 1, \\
		&\omega_1 s_1 - \omega_2 s_2 + \omega_3 s_3 - \omega_4 s_4 = 0, \\
		&\omega_1 \alpha_{i,j} \sigma_{1,i,j} \sqrt{s_1} - \omega_2 \beta_{i,j} \sigma_{1,i,j} \sqrt{s_2} - \omega_3 \alpha_{i,j} \sigma_{1,i,j} \sqrt{s_3} + \omega_4 \beta_{i,j} \sigma_{1,i,j} \sqrt{s_4} = 0, \\
		&\omega_1 \alpha_{i,j} \sigma_{2,i,j} \sqrt{s_1} + \omega_2 \beta_{i,j} \sigma_{2,i,j} \sqrt{s_2} - \omega_3 \alpha_{i,j} \sigma_{2,i,j} \sqrt{s_3} - \omega_4 \beta_{i,j} \sigma_{2,i,j} \sqrt{s_4} = 0,
	\end{align*}
	we simplify the coefficients to:
	\begin{align*}
		A_2 &= \frac{(\sigma_{1,i,j})^2}{2} A_1, \quad 
		A_3 = \rho_{i,j} \sigma_{1,i,j} \sigma_{2,i,j} A_1, \quad 
		A_4 = \frac{(\sigma_{2,i,j})^2}{2} A_1, \\
		A_5 &= b_{1,i,j} A_1, \quad 
		A_6 = b_{2,i,j} A_1, \quad 
		A_7 = -r_{i,j} A_1.
	\end{align*}
	
	Therefore,
	\begin{align*}
		I_1 &= A_1 \left[ \frac{\sigma_1^2}{2} \partial_{xx}^2 f + \rho \sigma_1 \sigma_2 \partial_{xy}^2 f + \frac{\sigma_2^2}{2} \partial_{yy}^2 f + b_1 \partial_x f + b_2 \partial_y f - r f + q \right] \bigg|_{x=x_i, y=y_j} + R \\
		&= R \quad \text{(by equation (\ref{eq:elliptic}))}.
	\end{align*}
	
	Now we distinguish two cases for the step sizes $s_k$:
	\begin{itemize}
		\item \textbf{Case 1:} $s_k = h$ for all $k = 1,2,3,4$. Then due to the symmetry of the quadrature points, the remainder $R$ becomes $O(h^2)$ (the $O(h^{3/2})$ terms cancel, leaving a leading term of order $h^2$).
		\item \textbf{Case 2:} At least one $s_k < h$. In this situation the smallest step size determines the order, and we have $R = O(h^{3/2})$.
	\end{itemize}
	
	Substituting these estimates into the expression for $e_{i,j}$ yields
	\[
	e_{i,j} = \sum_{k=1}^4 \dfrac{\omega_k}{1 + r_{i,j} s_k} e\left(X_{s_k}^{h,k}, Y_{s_k}^{h,k}\right) + E,
	\]
	where
	\[
	E = \begin{cases}
		O(h^2), & \text{in Case 1},\\
		O(h^{3/2}), & \text{in Case 2}.
	\end{cases}
	\]
	
	Taking absolute values and using the maximum norm $\|e\|_\infty = \max_{i,j}|e_{i,j}|$, we obtain the following bounds:
	\begin{itemize}
		\item In Case 1, since $\sum_{k=1}^4 \omega_k = 1$ and $1/(1+r_{i,j}h) \le 1/(1+r_0 h)$ with $r_0 = \min_{i,j} r_{i,j} > 0$, we have
		\[
		|e_{i,j}| \le \frac{1}{1+r_0 h}\,\|e\|_\infty + C h^2.
		\]
		\item In Case 2, the coefficients $\omega_k/(1+r_{i,j}s_k)$ satisfy $\sum_{k=1}^4 \frac{\omega_k}{1+r_{i,j}s_k} \le c < 1$ (e.g., $c = 3/4$ can be taken under the given parameter choices). Hence
		\[
		|e_{i,j}| \le c\,\|e\|_\infty + C h^{3/2}.
		\]
	\end{itemize}
	
	Let $\|e\|_\infty$ be attained at some grid point. In Case 1 we obtain
	\[
	\|e\|_\infty \le \frac{1}{1+r_0 h}\,\|e\|_\infty + C h^2,
	\]
	which gives $\|e\|_\infty \le \frac{1+r_0 h}{r_0 h}\,C h^2 = O(h)$. In Case 2 we have
	\[
	\|e\|_\infty \le c\,\|e\|_\infty + C h^{3/2},
	\]
	so that $\|e\|_\infty \le \frac{C}{1-c}\,h^{3/2} = O(h^{3/2})$, which is also $O(h)$. Therefore, in both cases we conclude $\|e\|_\infty \le O(h)$, completing the proof.
\end{proof}
\subsection{Reflective Boundary Conditions }

Consider the elliptic partial differential equation with homogeneous Neumann boundary conditions:
\begin{equation}
	\label{eq:elliptic-neumann}
	\begin{cases}
		\dfrac{1}{2} \mathrm{Tr} \left( A A^\top D^2 f \right) + B \cdot \nabla f - r(x,y) f + q(x,y) = 0, & (x,y) \in \Omega, \\
		\\
		\dfrac{\partial f}{\partial n} = 0, & (x,y)\in \partial\Omega
	\end{cases}
\end{equation}
where $\Omega = (x_0, x_{M_1}) \times (y_0, y_{M_2})$, $A A^\top = \begin{pmatrix} \sigma_1^2 & \rho \sigma_1 \sigma_2 \\ \rho \sigma_1 \sigma_2 & \sigma_2^2 \end{pmatrix}$, $B = (b_1, b_2)$, and $\dfrac{\partial f}{\partial n}$ is normal derivative on the boundary. $\rho, \sigma_1, \sigma_2, r, b_1, b_2$ are continuous functions of $(x,y)$ satisfying
$$
\inf_{(x,y) \in \Omega} \sigma_1^2 > 0, \quad \inf_{(x,y) \in \Omega} \sigma_2^2 > 0, \quad \rho \in [-1, 1], \quad r > r_0 > 0 \quad \forall (x,y) \in \Omega,
$$
where $r_0$ is a positive constant.

we choose the parametrization:
$$
A = \begin{pmatrix} \sigma_1 \cos \theta & \sigma_1 \sin \theta \\ \sigma_2 \sin \theta & \sigma_2 \cos \theta \end{pmatrix},
\quad \theta = \frac{\arcsin \rho}{2} \in \left[ -\frac{\pi}{4}, \frac{\pi}{4} \right].
$$
The differential operators are defined as $D^2 f = \begin{pmatrix} \partial_{xx} f & \partial_{xy} f \\ \partial_{yx} f & \partial_{yy} f \end{pmatrix}$ and $\nabla f = \begin{pmatrix} \partial_x f \\ \partial_y f \end{pmatrix}$.\\

Unlike first-order boundary conditions, the homogeneous Neumann boundary condition corresponds to a reflecting boundary in the associated stochastic process. When the process reaches the boundary, it reflects back into the interior domain, continuing its trajectory without termination or boundary penalties.

For interior grid points $1 \le i \le M_1-1$, $1 \le j \le M_2-1$, the numerical scheme is:

\begin{equation} 
	\label{eq:neumann-interior-update}
	f_{h,i,j} = \frac{1}{4(1+r_{i,j}h)} \sum_{k=1}^{4}
	\mathcal{L}^{(1)} f_h\big(X_k^h,Y_k^h\big) + q(x_i, y_j) h,
\end{equation}
where \((X_k^h,Y_k^h)\) are the final branch positions obtained by applying the rules \eqref{eq:x-reflection},\eqref{eq:y-reflection} to the proposed branch points.

The four proposed branch points are defined as:
\begin{equation}
	\label{pr}
	\begin{aligned}
		(x^*_1,y^*_1) &= \big(x_i+b_{1,i,j}h+\alpha_{i,j}\sigma_{1,i,j}\sqrt{h},\;
		y_j+b_{2,i,j}h+\alpha_{i,j}\sigma_{2,i,j}\sqrt{h}\big),\\
		(x^*_2,y^*_2) &= \big(x_i+b_{1,i,j}h-\beta_{i,j}\sigma_{1,i,j}\sqrt{h},\;
		y_j+b_{2,i,j}h+\beta_{i,j}\sigma_{2,i,j}\sqrt{h}\big),\\
		(x^*_3,y^*_3) &= \big(x_i+b_{1,i,j}h-\alpha_{i,j}\sigma_{1,i,j}\sqrt{h},\;
		y_j+b_{2,i,j}h-\alpha_{i,j}\sigma_{2,i,j}\sqrt{h}\big),\\
		(x^*_4,y^*_4) &= \big(x_i+b_{1,i,j}h+\beta_{i,j}\sigma_{1,i,j}\sqrt{h},\;
		y_j+b_{2,i,j}h-\beta_{i,j}\sigma_{2,i,j}\sqrt{h}\big).
	\end{aligned}
\end{equation}
For each proposed branch point \((x_k^*, y_k^*)\), the final branch point \((X_k^h, Y_k^h)\) is computed according to the following rules:

\begin{equation} \label{eq:x-reflection}
	X_k^h = 
	\begin{cases}
		2x_0 - x_k^*, & x_k^* < x_0, \\
		2x_{M_1} - x_k^*, & x_k^* > x_{M_1}, \\
		x_k^*, & \text{otherwise},
	\end{cases}
\end{equation}

\begin{equation} \label{eq:y-reflection}
	Y_k^h = 
	\begin{cases}
		2y_0 - y_k^*, & y_k^* < y_0, \\
		2y_{M_2} - y_k^*, & y_k^* > y_{M_2}, \\
		y_k^*, & \text{otherwise}.
	\end{cases}
\end{equation}
For boundary points, we handle them as follows:
\[
\begin{aligned}
	f_{h,0,j} &= f_{h,1,j},      & 0 \le j \le M_2,\\
	f_{h,M_1,j} &= f_{h,M_1-1,j}, & 0 \le j \le M_2,\\
	f_{h,i,0} &= f_{h,i,1},      & 0 \le i \le M_1,\\
	f_{h,i,M_2} &= f_{h,i,M_2-1}, & 0 \le i \le M_1.
\end{aligned}
\]
\begin{algorithm}
	\caption{Implementation of Neumann Boundary Scheme \eqref{eq:neumann-interior-update}}
	\label{alg:neumann-scheme}
	\begin{algorithmic}[1]
		\Require Interior grid points $(x_i, y_j)$, $i=1,\dots,M_1-1$, $j=1,\dots,M_2-1$
		\Ensure Coefficient matrix $T$ and right-hand side vector $b$
		
		\State Initialize $T \gets \mathbf{0}_{(M_1+1)(M_2+1) \times (M_1+1)(M_2+1)}$
		\State Initialize $b \gets \mathbf{0}_{(M_1+1)(M_2+1)}$
		
		\For{$i = 1$ to $M_1-1$}
		\For{$j = 1$ to $M_2-1$}
		\State $idx \gets i(M_2+1) + j$ \Comment{Global index}
		\For{$k = 1$ to $4$}
		\State Compute proposed position $(x_k^*, y_k^*)$ via \eqref{pr}
		\State $(X_k^h, Y_k^h) \gets$ apply reflection rules \eqref{eq:x-reflection}, \eqref{eq:y-reflection}
		\State Find interpolation stencil for $\mathcal{L}^{(1)} f_h(X_k^h, Y_k^h)$
		\For{each grid point $(i_m, j_n)$ in stencil}
		\State $idx_{mn} \gets i_m (M_2+1) + j_n$
		\State $l_{mn} \gets$ interpolation weight
		\State $T(idx, idx_{mn}) \gets T(idx, idx_{mn}) - \dfrac{l_{mn}}{4(1 + r_{i,j} h)}$
		\EndFor
		\EndFor
		\State $T(idx, idx) \gets T(idx, idx) + 1$ \Comment{Diagonal entry}
		\State $b(idx) \gets b(idx) + q(x_i, y_j) h$ \Comment{Source term}
		\EndFor
		\EndFor
		
		\State At the boundary, the following conditions are applied for matrix assembly:
		\[
		\begin{aligned}
			f_{h,0,j} &= f_{h,1,j},      & 0\le j\le M_2,\\
			f_{h,M_1,j} &= f_{h,M_1-1,j}, & 0\le j\le M_2,\\
			f_{h,i,0} &= f_{h,i,1},      & 0\le i\le M_1,\\
			f_{h,i,M_2} &= f_{h,i,M_2-1}, & 0\le i\le M_1.
		\end{aligned}
		\]
		\State Solve linear system $T F = b$ for $F$ \Comment{$F$ contains all grid values}
	\end{algorithmic}
\end{algorithm}
\newpage
This is an implicit scheme. Given that $r(x,y) > r_0 > 0$ for all $(x,y) \in \Omega$, the coefficient matrix $T$ is an M-matrix, ensuring the positivity-preserving property. Since $A$ is a sparse M-matrix, we can employ iterative methods such as Jacobi, Gauss-Seidel, or successive over-relaxation (SOR) to solve the linear system.
\newpage
\begin{figure}[htbp]
	\centering
	\includegraphics[width=0.8\textwidth]{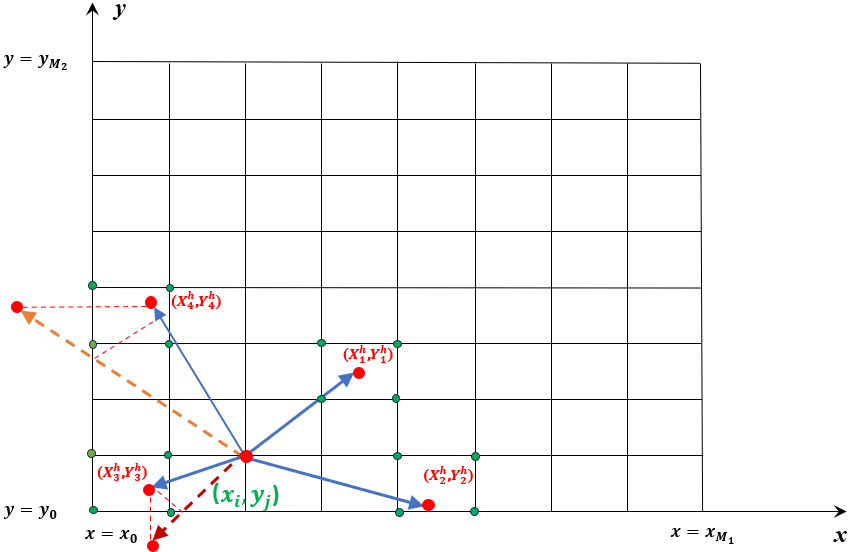}
	\caption{Illustration of Algorithm \ref{alg:neumann-scheme}. Here, $X_4^h < x_0$ and $Y_3^h < y_0$ are reflected back into the domain.}
	\label{fig2}
\end{figure}

\begin{theorem}
	\label{thm:neumann-convergence}
	Assume that the exact solution $f$ is sufficiently smooth and $h = h_1 = h_2$. Then the scheme \eqref{eq:neumann-interior-update} satisfies
	\[
	\|e\|_\infty \le O(h^{1/2}),
	\]
	where $e_{i,j}=f_{h,i,j}-f(x_i,y_j)$.
\end{theorem}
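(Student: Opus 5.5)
We follow the same error-recursion strategy used in the proof of Theorem~\ref{thm:convergence}. Subtracting the exact solution from scheme \eqref{eq:neumann-interior-update} at an interior node $(x_i,y_j)$ gives
\[
e_{i,j}=\frac{1}{4(1+r_{i,j}h)}\sum_{k=1}^4 \mathcal{L}^{(1)}e\big(X_k^h,Y_k^h\big)+\tau_{i,j},
\]
where the local truncation error is
\[
\tau_{i,j}=\frac{1}{4(1+r_{i,j}h)}\sum_{k=1}^4 \mathcal{L}^{(1)} f\big(X_k^h,Y_k^h\big)+q(x_i,y_j)h-f(x_i,y_j).
\]
Since $\mathcal{L}^{(1)}$ is bilinear interpolation with nonnegative weights summing to one, the multipliers of $e$ are nonnegative and sum to $(1+r_{i,j}h)^{-1}\le (1+r_0h)^{-1}$. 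Taking the maximum over nodes then gives $\|e\|_\infty\le (1+r_0h)^{-1}\|e\|_\infty+\max|\tau|$. We must also treat boundary nodes. The relation $f_{h,0,j}=f_{h,1,j}$ gives $e_{0,j}=e_{1,j}+(f(x_1,y_j)-f(x_0,y_j))=e_{1,j}+O(h^2)$, because $\partial_x f=0$ at $x_0$. The boundary errors are therefore controlled by interior ones up to $O(h^2)$, and we conclude $\|e\|_\infty\le C h^{-1}\max|\tau|$. The target is therefore $\max|\tau|=O(h^{3/2})$.

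\textbf{Interior consistency.} If no branch is reflected, we Taylor-expand as in Theorem~\ref{thm:convergence} with all $s_k=h$. The equal weights $1/4$ satisfy the moment conditions: the first moments of the $\pm\alpha,\pm\beta$ displacements cancel, and the second moments reproduce $\tfrac12 AA^\top h$ via $\alpha^2+\beta^2=2$ and $\alpha^2-\beta^2=2\rho$. The third moments cancel by symmetry. Combining this with $(1+r h)^{-1}=1-rh+O(h^2)$, the interpolation error $O(h^2)$, and the PDE, we get $\tau=O(h^2)$.

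\textbf{Near-boundary consistency (main obstacle).} Suppose a proposed point crosses, say, $x=x_0$, with overshoot $d=x_0-x_k^*\le C\sqrt h$. The reflection replaces $f(x_k^*,y_k^*)$ by $f(2x_0-x_k^*,y_k^*)$. Expanding both values about $(x_0,y_k^*)$ gives
\[
f(x_0+d,y)-f(x_0-d,y)=2d\,\partial_xf(x_0,y)+O(d^3)=O(h^{3/2}),
\]
because $\partial_xf(x_0,\cdot)=0$ by the Neumann condition. The even-order terms cancel identically under reflection. Hence the reflected scheme differs from the unreflected one, which evaluates a smooth extension of $f$ and has $O(h^2)$ consistency, by $O(h^{3/2})$ per branch. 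This holds provided $f$ admits a smooth extension slightly outside $\Omega$; we would assume this under ``sufficiently smooth'' or justify it via an even-type extension. Corner cases, where both coordinates reflect, are handled by applying the same argument coordinatewise with $\partial_y f=0$. The delicate point is that only odd-order terms survive, so the bound hinges on the vanishing normal derivative killing the first-order term; the cubic term does not vanish in general. This yields $\tau=O(h^{3/2})$ uniformly.

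\textbf{Conclusion.} Inserting $\max|\tau|=O(h^{3/2})$ into $\|e\|_\infty\le \frac{1+r_0h}{r_0h}\max|\tau|$ gives $\|e\|_\infty=O(h^{1/2})$, which proves Theorem~\ref{thm:neumann-convergence}. The bound is crude because the $O(h^{3/2})$ defect is amplified globally by $h^{-1}$, even though it occurs only in an $O(\sqrt h)$-wide boundary strip. This matches the stated rate; improving it would require a discrete barrier-function argument, which we do not pursue.
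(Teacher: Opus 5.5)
Your argument is correct, and its overall structure matches the paper's. Both proofs use the interior error recursion with contraction factor $(1+r_0h)^{-1}$. Both handle boundary nodes through the copy condition $f_{h,0,j}=f_{h,1,j}$ at a cost of $O(h^2)$, which is essentially the paper's Step 2. Both obtain an $O(h^{3/2})$ truncation error at nodes whose branches reflect, and amplify it by $h^{-1}$ to reach $O(h^{1/2})$.

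The difference is in how you prove the key local fact: that a reflected branch reproduces the unreflected expansion up to $O(h^{3/2})$.
\begin{itemize}
\item The paper expands $f(X_1^h,Y_1^h)$ about the node $(x_i,y_j)$, writing $X_1^h-x_i=2(x_{M_1}-x_i)-b_{1,i,j}h-\alpha_{i,j}\sigma_{1,i,j}\sqrt h$ with $x_{M_1}-x_i=O(\sqrt h)$. It then transports the Neumann condition back to the node, via relations like $-\partial_{xx}f\,(x_{M_1}-x_i)=\partial_xf+\partial_{xy}f\,(b_{2,i,j}h+\alpha_{i,j}\sigma_{2,i,j}\sqrt h)+O(h)$, and matches terms one at a time. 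It never evaluates $f$ outside $\bar\Omega$, but the bookkeeping is heavy.
\item You expand symmetrically about the mirror point on the boundary. Even-order terms then cancel identically, and the defect is visibly $2d\,\partial_xF(x_0,y_k^*)+O(d^3)$. This is shorter and exposes the surviving cubic term. The price is a $C^4$ extension $F$ of $f$ beyond $\partial\Omega$, which is available for $f\in C^4(\bar\Omega)$ on a rectangle by standard extension results.
\end{itemize}

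The corner case needs one more line than you give it. Whichever order you reflect in, one of the two one-dimensional comparisons uses $\partial_xF(x_0,y_k^*)$ with $y_k^*<y_0$, or $\partial_yF(x_k^*,y_0)$ with $x_k^*<x_0$. No Neumann condition is imposed at those points. The step still works: $\partial_xf(x_0,\cdot)\equiv0$ along the edge forces $\partial_xf=\partial_{xy}f=0$ at the corner. Hence $\partial_xF(x_0,y_k^*)=O(h)$, and that step costs $2d\cdot O(h)=O(h^{3/2})$.

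Your parenthetical about an even-type extension needs correcting. Let $\mathcal{R}$ be the reflection map and $\tilde f=f\circ\mathcal{R}$. This even extension is only $C^{2,1}$, since its third normal derivative generally jumps, so it cannot serve as the extension with $O(h^2)$ consistency in your argument. It does, however, give the cleanest proof of all:
\begin{itemize}
\item The reflected scheme applied to $f$ is exactly the unreflected scheme applied to $\tilde f$.
\item A second-order Taylor expansion of $\tilde f$, combined with the moment conditions, gives $\tau=O(h^{3/2})$ directly, corners included.
\item Here $\tilde f$ is $C^2$ across each edge and at the corners because $\partial_xf$, $\partial_yf$ and $\partial_{xy}f$ vanish on the relevant edges.
\end{itemize}
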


\begin{proof}
	We split the proof into three parts: interior nodes, boundary nodes, and the final recursion.
	
	\textbf{1. Interior nodes.}
	Consider interior nodes $i=1,\dots,M_1-1$ and $j=1,\dots,M_2-1$. To illustrate the reflection treatment, take the first branch as an example. Suppose the $x$--component of this branch crosses the vertical boundary $x=x_{M_1}$ and is reflected, while the $y$--component stays inside the domain. Then
	\[
	X_1^h = 2x_{M_1}-x_i - b_{1,i,j}h - \alpha_{i,j}\sigma_{1,i,j}\sqrt{h},\qquad
	Y_1^h = y_j + b_{2,i,j}h + \alpha_{i,j}\sigma_{2,i,j}\sqrt{h}.
	\]
	The other branches are treated similarly (reflected or not), and their Taylor expansions follow the same procedure.
	
	From the scheme \eqref{eq:neumann-interior-update} we obtain
	\[
	|e_{i,j}| \le |I_1| + \sum_{k=1}^4 \frac{1}{4(1+r_{i,j}h)}\bigl(\mathcal{L}^{(1)}f-f\bigr)(X_k^h,Y_k^h) + \frac{\|e\|_\infty}{1+r_0h} ,
	\]
	where 
	\[
	I_1 = \frac{1}{4(1+r_{i,j}h)}\sum_{k=1}^4 f(X_k^h,Y_k^h) - f(x_i,y_j) + q(x_i,y_j)h.
	\]
	
	Expand $f(X_1^h,Y_1^h)$ in a Taylor series:
	\[
	f(X_1^h,Y_1^h) = f_{i,j} + \sum_{p=1}^4 K_p + O(h^{3/2}),
	\]
	with
	\[
	\begin{aligned}
		K_1 &= 2(x_{M_1}-x_i)\Bigl[\partial_x f + \partial_{xy}f\bigl(b_{2,i,j}h+\alpha_{i,j}\sigma_{2,i,j}\sqrt{h}\bigr) + (x_{M_1}-x_i)\partial_{xx}f\Bigr]_{x_i,y_j},\\[2mm]
		K_2 &= \alpha_{i,j}\sigma_{1,i,j}\sqrt{h}\Bigl[-2\partial_{xx}f(x_{M_1}-x_i) - \partial_x f - \partial_{xy}f\bigl(b_{2,i,j}h+\alpha_{i,j}\sigma_{2,i,j}\sqrt{h}\bigr)\Bigr]_{x_i,y_j},\\[2mm]
		K_3 &= b_{1,i,j}h\Bigl[-2\partial_{xx}f(x_{M_1}-x_i) - \partial_x f\Bigr]_{x_i,y_j} + \partial_y f_{x_i,y_j}\bigl(b_{2,i,j}h+\alpha_{i,j}\sigma_{2,i,j}\sqrt{h}\bigr),\\[2mm]
		K_4 &= \frac12\partial_{xx}f_{x_i,y_j}(\sigma_{1,i,j})^2(\alpha_{i,j})^2h + \frac12\partial_{yy}f_{x_i,y_j}(\sigma_{2,i,j})^2(\alpha_{i,j})^2h.
	\end{aligned}
	\]
	
	Because $x_i + b_{1,i,j}h + \alpha_{i,j}\sigma_{1,i,j}\sqrt{h} \ge x_{M_1}$, we have $x_{M_1}-x_i = O(\sqrt{h})$. Using the Neumann boundary condition $\partial f/\partial n = 0$ on $\partial\Omega$, we obtain the key relations
	\[
	\begin{aligned}
		&\Bigl(\partial_x f + \partial_{xy}f\bigl(b_{2,i,j}h+\alpha_{i,j}\sigma_{2,i,j}\sqrt{h}\bigr) + \partial_{xx}f(x_{M_1}-x_i)\Bigr)_{x_i,y_j} + O(h^{1/2}) = 0,\\
		&-\partial_{xx}f_{x_i,y_j}(x_{M_1}-x_i) = \partial_x f_{x_i,y_j} + \partial_{xy}f_{x_i,y_j}\bigl(b_{2,i,j}h+\alpha_{i,j}\sigma_{2,i,j}\sqrt{h}\bigr) + O(h),\\
		&-\partial_{xx}f_{x_i,y_j}(x_{M_1}-x_i) = \partial_x f_{x_i,y_j} + O(h).
	\end{aligned}
	\]
	
	Substituting these relations into the expansion yields
	\[
	\begin{aligned}
		f(X_1^h,Y_1^h) &= f_{i,j} + \partial_x f_{x_i,y_j}\bigl(b_{1,i,j}h+\alpha_{i,j}\sigma_{1,i,j}\sqrt{h}\bigr)\\
		&\quad + \partial_y f_{x_i,y_j}\bigl(b_{2,i,j}h+\alpha_{i,j}\sigma_{2,i,j}\sqrt{h}\bigr)\\
		&\quad + \partial_{xy}f_{x_i,y_j}(\alpha_{i,j})^2\sigma_{1,i,j}\sigma_{2,i,j}h\\
		&\quad + \frac12\partial_{xx}f_{x_i,y_j}(\sigma_{1,i,j})^2(\alpha_{i,j})^2h + \frac12\partial_{yy}f_{x_i,y_j}(\sigma_{2,i,j})^2(\alpha_{i,j})^2h + O(h^{3/2}).
	\end{aligned}
	\]
	
	Analogous expansions hold for $f(X_2^h,Y_2^h)$, $f(X_3^h,Y_3^h)$, and $f(X_4^h,Y_4^h)$. Combining them and using the governing equation \eqref{eq:elliptic-neumann} gives $|I_1| = O(h^{3/2})$. Consequently, for interior points,
	\[
	|e_{i,j}| \le \frac{\|e\|_\infty}{1+r_0h} + O(h^{3/2}).
	\]
	
	\textbf{2. Boundary nodes.}
	Take the right boundary $i=M_1$ and $j=1,\dots,M_2-1$ as an example. By the discrete Neumann condition we have $f_{h,M_1,j}=f_{h,M_1-1,j}$, hence
	\[
	|e_{M_1,j}| \le |f_{h,M_1-1,j}-f(x_{M_1-1},y_j)| + |f(x_{M_1},y_j)-f(x_{M_1-1},y_j)|.
	\]
	Using the continuous Neumann condition and a Taylor expansion,
	\[
	f(x_{M_1-1},y_j) = f(x_{M_1},y_j) + \partial_x f(x_{M_1},y_j)(x_{M_1-1}-x_{M_1}) + O(h^2) = f(x_{M_1},y_j) + O(h^2).
	\]
	Thus
	\[
	|e_{M_1,j}| \le \frac{\|e\|_\infty}{1+r_0h} + O(h^{3/2}).
	\]
	Similar estimates hold for the other boundary segments.
	
	\textbf{3. Global error estimate.}
	Collecting the estimates for all nodes, we have
	\[
	|e_{i,j}| \le \frac{\|e\|_\infty}{1+r_0h} + C h^{3/2}
	\]
	for some constant $C$ independent of $h$. Taking the maximum over all $(i,j)$ gives
	\[
	\|e\|_\infty \le \frac{\|e\|_\infty}{1+r_0h} + C h^{3/2}.
	\]
	Rearranging,
	\[
	\left(1-\frac{1}{1+r_0h}\right)\|e\|_\infty \le C h^{3/2}
	\quad\Longrightarrow\quad
	\frac{r_0h}{1+r_0h}\|e\|_\infty \le C h^{3/2},
	\]
	so that
	\[
	\|e\|_\infty \le C\frac{1+r_0h}{r_0}\,h^{1/2} = O(h^{1/2}).
	\]
	This completes the proof.
\end{proof}

\subsection{Periodic Boundary Conditions}
\label{subsec:periodic}

We now consider periodic boundary conditions.
Let $\Omega = (x_0,x_{M_1}) \times (y_0,y_{M_2})$ denote the computational domain, and define the spatial periods
\[
L_x := x_{M_1}-x_0, \qquad L_y := y_{M_2}-y_0.
\]
The solution $f$ and all PDE coefficients are assumed to be defined on $\mathbb{R}^2$ and to be doubly periodic with respect to these periods.  
That is, for all $(x,y)\in\mathbb{R}^2$ and all integers $m,n$,
\[
f(x + mL_x,\, y) = f(x,y), 
\qquad
f(x,\, y + nL_y) = f(x,y),
\]
and the same periodicity holds for $A$, $B$, and $r$ in \eqref{eq:PDE-periodic}.  

Under periodicity, the PDE is naturally posed on the 2D torus 
$\mathbb{T}^2 := \mathbb{R}^2 / (L_x\mathbb{Z} \times L_y\mathbb{Z})$:

\begin{equation}  \label{eq:PDE-periodic}
	\frac{1}{2}\mathrm{Tr}(AA^\top D^2 f) + B\cdot\nabla f - r(x,y)f = 0, 
	(x,y)\in\mathbb{R}^2, \\
\end{equation}
where the diffusion tensor and convection vector are given by
\[
AA^\top = \begin{pmatrix} \sigma_1^2 & \rho \sigma_1 \sigma_2 \\ \rho \sigma_1 \sigma_2 & \sigma_2^2 \end{pmatrix}, \quad 
B = (b_1, b_2)^\top,
\]
with coefficients satisfying the regularity conditions:
\[
\inf_{(x,y) \in \Omega} \sigma_1^2 > 0, \quad \inf_{(x,y) \in \Omega} \sigma_2^2 > 0, \quad 
\rho \in [-1, 1], \quad r > r_0 > 0.
\]

We employ the parametrization:
\[
A = \begin{pmatrix} \sigma_1 \cos \theta & \sigma_1 \sin \theta \\ \sigma_2 \sin \theta & \sigma_2 \cos \theta \end{pmatrix},
\quad \theta = \frac{\arcsin \rho}{2} \in \left[ -\frac{\pi}{4}, \frac{\pi}{4} \right].
\]

In the discrete approximation, any branch endpoint that leaves $\Omega$ must be wrapped 
back into the fundamental cell via periodicity.  
Define the wrapping operator $\mathcal{W}: \mathbb{R}^2\to\Omega$ by
\begin{equation}
	\label{wrap}
	\mathcal{W}(x,y)
	=
	\bigl(x_0 + \{(x-x_0)\bmod L_x\},\ 
	y_0 + \{(y-y_0)\bmod L_y\}\bigr),
\end{equation}
For each grid node $(x_i,y_j,t_n)$, compute the four  endpoints by \eqref{pr}
\[
(x_k^*, y_k^*), \qquad k=1,\dots,4,
\]
and then apply periodic wrapping:
\[
(X_k^h,Y_k^h) = \mathcal{W}(x_k^*, y_k^*), \qquad k=1,\dots,4.
\]

With no boundary-induced stopping, all branches evolve over the full step \(h\), yielding the explicit update:
\begin{equation} \label{eq:periodic-update}
	f_{h,i,j} = \frac{1}{4(1 + r_{i,j} h)} \sum_{k=1}^4 \mathcal{L}^{(1)} f_h(X_k^h,Y_k^h)+ q(x_i, y_j) h.
\end{equation}

\begin{algorithm}
	\caption{Implementation of Periodic Boundary Scheme \eqref{eq:periodic-update}}
	\label{alg:periodic}
	\begin{algorithmic}[1]
		\Require Grid points $(x_i, y_j)$ for $i=0,\dots,M_1$, $j=0,\dots,M_2$
		\Ensure Numerical solution $f_{h,i,j}$
		
		\State Initialize $T \gets \mathbf{0}_{(M_1+1)(M_2+1) \times (M_1+1)(M_2+1)}$
		\State Initialize $b \gets \mathbf{0}_{(M_1+1)(M_2+1)}$
		
		\For{$i = 0$ to $M_1$}
		\For{$j = 0$ to $M_2$}
		\State $idx \gets i(M_2+1) + j$
		\For{$k = 1$ to $4$}
		\State Compute proposed position $(x_k^*, y_k^*)$ via \eqref{pr}
		\State $(X_k^h, Y_k^h) \gets \mathcal{W}(x_k^*, y_k^*)$ \Comment{using \eqref{wrap}}
		\State Find interpolation stencil for $\mathcal{L}^{(1)} f_h(X_k^h, Y_k^h)$
		\For{each grid point $(i_m, j_n)$ in stencil}
		\State $idx_{mn} \gets i_m (M_2+1) + j_n$
		\State $l_{mn} \gets$ interpolation weight
		\State $T(idx, idx_{mn}) \gets T(idx, idx_{mn}) - \dfrac{l_{mn}}{4(1 + r_{i,j} h)}$
		\EndFor
		\EndFor
		\State $T(idx, idx) \gets T(idx, idx) + 1$ \Comment{Diagonal entry}
		\State $b(idx) \gets b(idx) + q(x_i, y_j) h$ \Comment{Source term from \eqref{eq:periodic-update}}
		\EndFor
		\EndFor
		
		\State Solve linear system $T F = b$ for $F$
		\State $f_{h,i,j} \gets$ extract values from $F$
	\end{algorithmic}
\end{algorithm}
This is an implicit scheme. Given that $r(x,y) > r_0 > 0$ for all $(x,y) \in \Omega$, the coefficient matrix $T$ is an M-matrix, ensuring the positivity-preserving property. Since $A$ is a sparse M-matrix, we can employ iterative methods such as Jacobi, Gauss-Seidel, or successive over-relaxation (SOR) to solve the linear system.
\begin{figure}[H]
	\centering
	\includegraphics[width=0.8\textwidth]{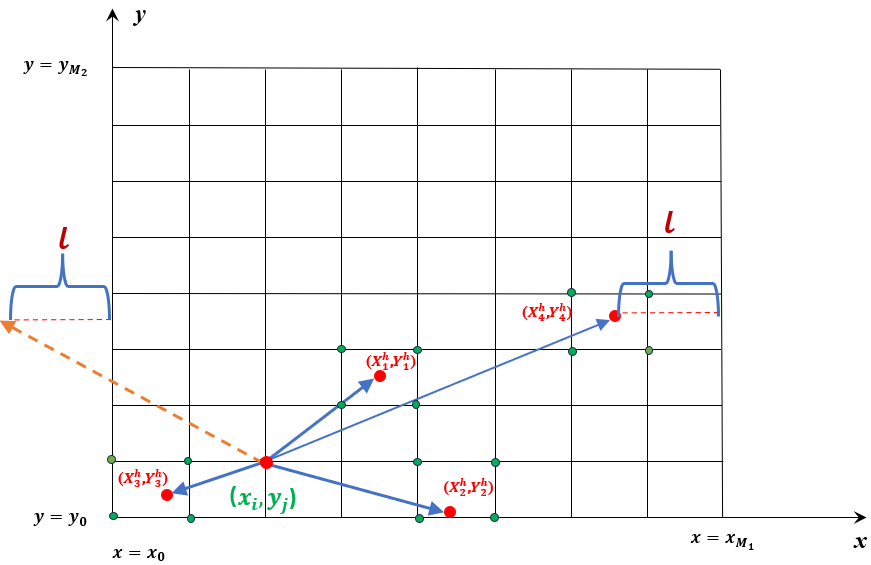}
	\caption{Schematic illustration of Algorithm \ref{alg:periodic}, showing periodic wrapping of branch endpoints.}
	\label{fig:periodic-scheme}
\end{figure}

\begin{theorem}[Improved Convergence under Periodicity]
	\label{thm:periodic-convergence}
	Assume the exact solution \(f\) is smooth and spatially periodic on \(\Omega\). With \(h = h_1 = h_2\), there exists a constant \(C > 0\), independent of mesh sizes, such that the numerical solution from Algorithm \ref{alg:periodic} satisfies
	\[
	\|e\|_\infty \le Ch.
	\]
\end{theorem}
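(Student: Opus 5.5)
The plan is to follow the error-recursion template of Theorem~\ref{thm:convergence} and Theorem~\ref{thm:neumann-convergence}. Since there is no boundary-induced stopping, every node is handled uniformly. Subtracting the exact solution from the scheme \eqref{eq:periodic-update} at an arbitrary node $(x_i,y_j)$ gives
\[
e_{i,j} = \frac{1}{4(1+r_{i,j}h)}\sum_{k=1}^4 \mathcal{L}^{(1)} e\big(X_k^h,Y_k^h\big) + \frac{1}{4(1+r_{i,j}h)}\sum_{k=1}^4 \big(\mathcal{L}^{(1)} f - f\big)\big(X_k^h,Y_k^h\big) + I_1,
\]
where
\[
I_1 = \frac{1}{4(1+r_{i,j}h)}\sum_{k=1}^4 f\big(X_k^h,Y_k^h\big) - f(x_i,y_j) + q(x_i,y_j)h
\]
is the local truncation error.

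The key observation is that $f$ is periodic, so $f(\mathcal{W}(x_k^*,y_k^*)) = f(x_k^*,y_k^*)$. Hence the wrapped points can be replaced by the unwrapped proposed points \eqref{pr}, with no reflection-type correction. This is exactly what removes the $O(\sqrt h)$ loss that occurred in the Neumann case.

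\textbf{Step 1: truncation error.} I will Taylor expand $f(x_k^*,y_k^*)$ about $(x_i,y_j)$, with displacement $b\,h \pm(\cdot)\sigma\sqrt h$, up to fourth order. The four branches are symmetric pairs ($k=1,3$ and $k=2,4$) with equal weights $1/4$. The odd powers of $\sqrt h$ therefore cancel, as in Case~1 of Theorem~\ref{thm:convergence}. The $O(h)$ terms reproduce $h\big[\tfrac12\mathrm{Tr}(AA^\top D^2 f)+B\cdot\nabla f\big]$, using $\alpha^2+\beta^2=2$ and $\alpha^2-\beta^2=2\rho$. I will multiply through by $1+r_{i,j}h$ and use the equation \eqref{eq:PDE-periodic} (with source $q$). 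This should yield $|I_1| \le C h^2$, with $C$ depending on derivatives of $f$ up to fourth order and on the sup norms of the coefficients.

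\textbf{Step 2: interpolation error.} For the bilinear interpolation operator, $|\mathcal{L}^{(1)}f - f| \le C h^2\|D^2 f\|_\infty$. With the periodic extension of the grid, this bound holds uniformly, including at cells that straddle the seam $x=x_{M_1}\equiv x_0$. It is multiplied by a weight sum at most $1$. Since $\mathcal{L}^{(1)}$ has nonnegative weights summing to one, $|\mathcal{L}^{(1)}e| \le \|e\|_\infty$.

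\textbf{Step 3: recursion.} Combining the two steps and using $r_{i,j}>r_0$ gives
\[
|e_{i,j}| \le \frac{\|e\|_\infty}{1+r_0h} + C h^2
\]
at every node. Taking the maximum and rearranging as before gives $\|e\|_\infty \le C\frac{1+r_0h}{r_0}h = O(h)$, with a constant independent of the mesh sizes.

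The main obstacle is Step~2. A uniform $O(h^2)$ interpolation error is essential, since any $O(h^{3/2})$ term would degrade the rate to $h^{1/2}$. I would handle this by viewing the nodal values on the torus, identifying $f_{h,0,j}$ with $f_{h,M_1,j}$ so that no cell is special. A secondary check is that the fourth-order Taylor remainder is indeed $O(h^2)$ after the odd-order cancellation. This requires $f\in C^4$, which the smoothness hypothesis provides.
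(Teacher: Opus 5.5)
Your proposal is correct and is essentially the paper's argument: the paper's proof sketch likewise combines the symmetric cancellation of odd moments (an $O(h^2)$ quadrature error), an $O(h^2)$ bilinear interpolation error, and ``global error $=R_{\mathrm{loc}}/h$'', which is exactly your recursion $|e_{i,j}|\le \|e\|_\infty/(1+r_0h)+Ch^2$. Your explicit use of $f\circ\mathcal{W}=f$ and of the max-norm recursion from Theorems~\ref{thm:convergence} and~\ref{thm:neumann-convergence} just fills in details the sketch leaves implicit. The seam concern in Step~2 is moot: wrapped points lie in $[x_0,x_{M_1})\times[y_0,y_{M_2})$, so all four bilinear nodes are genuine grid nodes where $f$ is smooth.
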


\begin{proof}[Proof sketch]
	Under periodic boundary conditions, all spatial boundary hitting parameters equal \(h\), eliminating asymmetric stopping effects. Taylor expansion of the four branch contributions around \((x_i, y_j)\) shows exact cancellation of odd spatial moments and proper matching of second-order moments with diffusion coefficients, yielding quadrature error \(O(h^2)\). Combined with spatial interpolation error \(O(h^2)\), the local truncation error becomes:
	\[
	R_{\mathrm{loc}} = O(h^2).
	\]
	Global error scales as \(R_{\mathrm{loc}}/h\), giving first-order convergence \(O(h)\).
\end{proof}

\section{Numerical Results}
\label{sec:numerical-results}

In this section, we present numerical experiments to verify the convergence behavior of the schemes proposed in Section~\ref{sec2}. We report both the maximum-norm errors and the $L^2$-norm errors:

\[
\text{Error}_{L^\infty} = \max_{0 \le i \le M_1} \max_{0 \le j \le M_2} \big| f_{h,i,j} - f(x_i, y_j) \big|,
\]
\[
\text{Error}_{L^2} = \left( h_1 h_2 \sum_{i=0}^{M_1} \sum_{j=0}^{M_2} \big| f_{h,i,j} - f(x_i, y_j) \big|^2 \right)^{1/2}.
\]

The empirical convergence rates are computed between successive refinements by
\[
\text{Rate} =\dfrac{\log\left( {\text{Error}(h_1)}/{\text{Error}(h_2)} \right)}{\log {(h_1/h_2)}},
\]
where $\text{Error}(h)$ denotes the error at grid spacing $h$.

All runs use the scaling $h = h_1 = h_2$ , unless stated otherwise.

We begin with Dirichlet boundary conditions and compare  Algorithm \ref{alg:scheme-elliptic-dirichlet}  with the LISL method described in \cite{bib4}, using two boundary treatments: Exact and Extrapolation. We subsequently examine performance under homogeneous Neumann and periodic boundary conditions.

\subsection{Dirichlet Boundary Conditions}

Consider the following equation:

\begin{equation}
	\left\{
	\begin{aligned}
		& \frac{\sigma_1^2}{2} \frac{\partial^2 f}{\partial x^2} + \frac{\sigma_2^2}{2} \frac{\partial^2 f}{\partial y^2} + \rho \sigma_1 \sigma_2 \frac{\partial^2 f}{\partial x \partial y} - r f + q = 0, \quad (x, y) \in (0, 1) \times (0, 1), \\
		& f(1,y) =e^{1+y}, \quad y \in [0,1], \\
		& f(0,y) =  e^{y}, \quad y \in [0,1], \\
		& f(x,1) =  e^{1+x}, \quad x \in [0,1], \\
		& f(x,0) = e^{x}, \quad x \in [0,1].
	\end{aligned}
	\right.
\end{equation}

The exact solution is $f(x,y) =  e^{x+y}$.
where $\sigma_1 = \dfrac{x+1}{2}$, $\sigma_2 = {x+1}$, $\rho = 0.9$, $r = 1.075\cdot(x+1)^2$,$q=0.$
The associated covariance matrix is
\[
A A^\top = (x+1)^2
\begin{pmatrix}
	\dfrac{1}{4} & \dfrac{9}{20} \\[2pt]
	\\
	\dfrac{9}{20} & 1
\end{pmatrix},
\]
and the inner \(2 \times 2\) coefficient matrix is {not diagonally dominant}. Thus, this periodic test likewise represents an anisotropic mixed-derivative diffusion case.

We compare Algorithms~\ref{alg:scheme-elliptic-dirichlet} with the  the semi-Lagrangian (SL) method{\cite{bib4}}. Using the fifth type of approximation from section 5.1 of the original text.

\textbf{Boundary treatments.}
Due to stencil non-compactness, LISL evaluation points may fall outside $\Omega$. We test:

\begin{itemize}
	\item \textbf{Exact}:  
	Outside points are assigned the exact value $e^{x+y}$.  
	This can achieve the theoretical convergence order but is not practical.
	
	\item \textbf{Extrapolation}:  
	For a point $(x_q, y_q)$ outside $\Omega$, we first compute  indices:
	$i_0 = (x_q - x_{0})/h_1 + 1$ and $j_0 = (y_q - y_{0})/h_2 + 1$. 
	Then we clamp the indices to the nearest boundary cell by setting 
	$i = \min(\max(\lfloor i_0 \rfloor, 1), M_1-1)$ and $j = \min(\max(\lfloor j_0 \rfloor, 1), M_2-1)$ interpolation at $(x_q, y_q)$ using the four grid nodes 
	$(x_i, y_j), (x_{i+1}, y_j), (x_i, y_{j+1}), (x_{i+1}, y_{j+1})$. 
\end{itemize}

\subsubsection{Numerical Results}
\begin{table}[!h]
	\centering
	\caption{$L^\infty$ and $L^2$ Errors with Convergence Rates (Algorithm \ref{alg:scheme-elliptic-dirichlet})}
	\label{tab:dirichlet-errors}
	\small
	\begin{tabular}{cccccc}
		\toprule
		$M_1$ (=1/$h_1$) & $M_2$ (=1/$h_2$) & $\text{Error}_{L^\infty}$ & $\text{Rate}_{L^\infty}$ & $\text{Error}_{L^2}$ & $\text{Rate}_{L^2}$ \\
		\midrule
		20 & 20 & 8.5027e-2 & --- & 2.9839e-2 & --- \\
		40 & 40 & 4.0856e-2 & 1.0574 & 1.4857e-2 & 1.0060 \\
		80 & 80 & 1.8062e-2 & 1.1776 & 7.0148e-3 & 1.0827 \\
		160 & 160 & 7.6846e-3 & 1.2329 & 3.2701e-3 & 1.1011 \\
		320 & 320 & 3.4316e-3 & 1.1631 & 1.5220e-3 & 1.1034 \\
		640 & 640 & 1.5755e-3 & 1.1230 & 7.1417e-4 & 1.0916 \\
		\bottomrule
	\end{tabular}
\end{table}
\begin{table}[!h]
	\centering
	\caption{LISL-Exact scheme: $L^\infty$ errors at $t=0$ and convergence rates for the Dirichlet problem.}
	\label{tab:lisl_exact}
	\small
	\begin{tabular}{cccccc}
		\toprule
		$M_1$ (=1/$h_1$) & $M_2$ (=1/$h_2$) & $\text{Error}_{L^\infty}$ & $\text{Rate}_{L^\infty}$ & $\text{Error}_{L^2}$ & $\text{Rate}_{L^2}$ \\
		\midrule
		20 & 20 & 1.9640e-02 & --- & 9.9800e-03 & --- \\
		40 & 40 & 9.2600e-03 & 1.084 & 4.6710e-03 & 1.095 \\
		80 & 80 & 4.2740e-03 & 1.115 & 2.2020e-03 & 1.085 \\
		160 & 160 & 2.0070e-03 & 1.091 & 1.0430e-03 & 1.078 \\
		320 & 320 & 9.6830e-04 & 1.052 & 4.9950e-04 & 1.062 \\
		640 & 640 & 4.6930e-04 & 1.045 & 2.4120e-04 & 1.050 \\
		\bottomrule
	\end{tabular}
\end{table}
\begin{table}[!h]
	\centering
	\caption{LISL-Extrapolation scheme: $L^\infty$ errors at $t=0$ and convergence rates for the Dirichlet problem. }
	\label{tab:lisl_exact}
	\small
	\begin{tabular}{cccccc}
		\toprule
		$M_1$ (=1/$h_1$) & $M_2$ (=1/$h_2$) & $\text{Error}_{L^\infty}$ & $\text{Rate}_{L^\infty}$ & $\text{Error}_{L^2}$ & $\text{Rate}_{L^2}$ \\
		\midrule
		20 & 20 & 1.5770e+00 & --- & 2.8130e-01 & --- \\
		40 & 40 & 1.2960e+00 & 0.283 & 2.0680e-01 & 0.443 \\
		80 & 80 & 9.9900e-01 & 0.375 & 1.4760e-01 & 0.487 \\
		160 & 160 & 7.4520e-01 & 0.422 & 1.0450e-01 & 0.499 \\
		320 & 320 & 5.4900e-01 & 0.441 & 7.3740e-02 & 0.503 \\
		640 & 640 & 3.9380e-01 & 0.479 & 5.2010e-02 & 0.504 \\
		\bottomrule
	\end{tabular}
\end{table}
\subsubsection{Discussion}

The results presented in Tables~\ref{tab:dirichlet-errors}--\ref{tab:lisl_exact} demonstrate the following:

\textbf{1. Convergence performance.}
The numerical experiments show that the convergence rate of Algorithm \ref{alg:scheme-elliptic-dirichlet} is consistent with the Theorem~\ref{thm:convergence}.

In comparison, the LISL--Exact method also achieves near first-order accuracy, which is consistent with theoretical expectations. However, it relies on knowing the exact solution outside the domain and is therefore impractical. The LISL--Extrapolation variant, which uses bilinear extrapolation at the boundary, performs markedly worse, with convergence rates around $0.4$---clearly illustrating how  extrapolation  degrades the accuracy of semi-Lagrangian schemes.

\textbf{2. Boundary treatment.}
Algorithms~\ref{alg:scheme-elliptic-dirichlet}  naturally incorporate boundary interactions through their stopping time mechanism and the adaptive probabilities defined in \eqref{eq:spatial-weights}. Consequently, they require neither extrapolation nor exact exterior values, and they maintain  accuracy and monotonicity  up to the boundary.

By contrast, the semi-Lagrangian method faces an inherent trade-off: it either depends on impractical exact boundary data or suffers from poor convergence when extrapolation is employed. This limitation underscores a structural difficulty of LISL-type schemes in handling boundaries accurately.
\medskip

In summary, the expectation-based schemes proposed here display much more robust boundary handling than the semi-Lagrangian approach. By avoiding both extrapolation and the need for exact boundary information, they offer a more reliable and practical choice for anisotropic diffusion problems.
\subsection{ Homogeneous Neumann Boundary Conditions}

Consider the following equation:

\begin{equation}
	\left\{
	\begin{aligned}
		& \frac{\sigma_1^2}{2} \frac{\partial^2 f}{\partial x^2} + \frac{\sigma_2^2}{2} \frac{\partial^2 f}{\partial y^2} + \rho \sigma_1 \sigma_2 \frac{\partial^2 f}{\partial x \partial y} - r f + q = 0, \quad (x, y) \in (0, 1) \times (0, 1), \\
		&\dfrac{\partial f}{\partial n}=0 ,(x,y)\in\partial\Omega.
	\end{aligned}
	\right.
\end{equation}

where $\sigma_1 = \dfrac{x+1}{2\pi}$, $\sigma_2 = \dfrac{x+1}{\pi}$, $\rho = 0.9$, $r =4$, and
\[\begin{aligned}
	q &= -0.45(x+1)^2\cos(\pi x - \dfrac{\pi}{2})\cos(\pi y - \dfrac{\pi}{2}) +\dfrac{5}{8} (x+1)^2\sin(\pi x - \dfrac{\pi}{2})\sin(\pi y - \dfrac{\pi}{2})\\&+4\sin(\pi x - \dfrac{\pi}{2})\sin(\pi y - \dfrac{\pi}{2}).
\end{aligned}\]
The associated covariance matrix is
\[
A A^\top = (x+1)^2
\begin{pmatrix}
	\dfrac{1}{4\pi^2} & \dfrac{9}{20\pi^2} \\[2pt]
	\\
	\dfrac{9}{20\pi^2} & \dfrac{1}{\pi^2}
\end{pmatrix},
\]
and the inner \(2 \times 2\) coefficient matrix is {not diagonally dominant}. Thus, this periodic test likewise represents an anisotropic mixed-derivative diffusion case.

The exact solution is $f(x,y) = \sin{(\pi x - \dfrac{\pi}{2})}\sin{(\pi y - \dfrac{\pi}{2})}$.

\begin{table}[!h]
	\centering
	\caption{$L^\infty$ and $L^2$ Errors with Convergence Rates (Algorithm \ref{alg:neumann-scheme})}
	\label{tab:neumann-results}
	\small
	\begin{tabular}{cccccc}
		\toprule
		$M_1$ (=1/$h_1$) & $M_2$ (=1/$h_2$) & $L^\infty$ Error & $L^2$ Error & $L^\infty$ Rate & $L^2$ Rate \\
		\midrule
		20 & 20 & 1.9990e-1 & 9.9898e-2 & --- & --- \\
		40 & 40 & 1.0111e-1 & 4.9244e-2 & 0.9834 & 1.0205 \\
		80 & 80 & 5.1879e-2 & 2.4765e-2 & 0.9627 & 0.9916 \\
		160 & 160 & 2.5990e-2 & 1.2261e-2 & 0.9972 & 1.0142 \\
		320 & 320 & 1.2961e-2 & 6.1752e-3 & 1.0038 & 0.9895 \\
		640 & 640 & 6.5744e-3 & 3.0489e-3 & 0.9792 & 1.0182 \\
		\bottomrule
	\end{tabular}
\end{table}
The observed rates for the reflective test exceed the conservative theoretical bound in some grid regimes; this is consistent with the fact that for most interior nodes the four branches do not hit the boundary and therefore attain higher local accuracy.
\subsection{Periodic Boundary Conditions}
\label{subsec:numerics-periodic}

The exact solution  
\[
f(x, y) =  \sin(2\pi x) \sin(2\pi y)
\]  
is defined on $\mathbb{R}^2$, with the computational domain taken as the unit square \([0, 1] \times [0, 1]\). Periodic boundary conditions (with periods \(L_x = L_y = 1\)) are applied, 
which satisfies the backward PDE:
\begin{equation} 
	\label{eq:exp-periodic}
	\displaystyle
	\frac{1}{2} \sigma_1^2 \partial_{xx} f + \frac{1}{2} \sigma_2^2 \partial_{yy} f + \rho \sigma_1 \sigma_2 \partial_{xy} f + b_1 \partial_x f + b_2 \partial_y f - r f = 0.
\end{equation}
with periodic boundary conditions and coefficients defined as:
\[
\begin{aligned}
	&b_1 = \sin(2\pi x) \cos(2\pi y), \\
	&b_2 = -\sin(2\pi y) \cos(2\pi x), \\
	&\sigma_1 = \frac{1}{2\pi} \left(\sin(2\pi x) \sin(2\pi y)+2\right), \\
	&\sigma_2 =  \frac{1}{4\pi} \left(\sin(2\pi x) \sin(2\pi y)+2\right), \\
	&\rho = 0.9, \\
	&r(x,y) =4.\\
	&q = \frac{5}{8} \left( \sin(2\pi x ) \sin(2\pi y ) + 2 \right)^2 \sin(2\pi x) \sin(2\pi y) - \frac{9}{20} \left( \sin(2\pi x) \sin(2\pi y ) + 2 \right)^2 \cos(2\pi x ) \cos(2\pi y ) \\&+ 4 \sin(2\pi x ) \sin(2\pi y )
\end{aligned}
\]
The associated covariance matrix is
\[
A A^\top =  \frac{1}{4\pi^2} \left(\sin(2\pi x) \sin(2\pi y)+2\right)^2
\begin{pmatrix}
	1 & 0.45 \\[2pt]
	\\
	0.45 & 0.25
\end{pmatrix},
\]
and the inner \(2 \times 2\) coefficient matrix is {not diagonally dominant}. Thus, this periodic test likewise represents an anisotropic mixed-derivative diffusion case.

The periodic run errors at \(t = 0\) are given in Table~\ref{tab:periodic_consistent}.

\begin{table}[htb]
	\centering
	\caption{Maximum absolute error and $L^2$ error with convergence rates at $t=0$ for the periodic problem (Algorithm~\ref{alg:periodic}).}
	\label{tab:periodic_consistent}
	\small
	\begin{tabular}{@{}ccccccc@{}}
		\toprule
		\(M_1\) & \(M_2\) & \(N\) &$\text{Error}_{L^\infty}$ & $\text{Rate}_{L^\infty}$ & $\text{Error}_{L^2}$ & $\text{Rate}_{L^2}$ \\
		\midrule
		20  & 20  & 20  & \(2.0575 \times 10^{-1}\) & --- & \(7.7985 \times 10^{-2}\) & --- \\
		40  & 40  & 40  & \(9.7576 \times 10^{-2}\) & 1.08 & \(3.6975 \times 10^{-2}\) & 1.08 \\
		80  & 80  & 80  & \(4.6353 \times 10^{-2}\) & 1.07 & \(1.8095 \times 10^{-2}\) & 1.03 \\
		160 & 160 & 160 & \(2.2804 \times 10^{-2}\) & 1.02 & \(8.8716 \times 10^{-3}\) & 1.03 \\
		320 & 320 & 320 & \(1.1143 \times 10^{-2}\) & 1.03 & \(4.4519 \times 10^{-3}\) & 0.99 \\
		640 & 640 & 640 & \(5.5619 \times 10^{-3}\) & 1.00 & \(2.2124 \times 10^{-3}\) & 1.01 \\
		\bottomrule
	\end{tabular}
\end{table}

The periodic experiment exhibits empirical convergence close to first order on refined meshes, in agreement with Theorem~\ref{thm:periodic-convergence}.

\section{Conclusion}
\label{sec4}
This paper has introduced a novel non-compact positivity-preserving numerical framework for elliptic partial differential equations, built upon the Feynman-Kac formula and mathematical expectation. By representing the solution as a conditional expectation of a discretized stochastic process and approximating it through finite branching paths with positivity-preserving interpolation, we obtain schemes that inherently preserve non-negativity while handling anisotropic diffusion without diagonal dominance constraints.

A key contribution lies in the systematic development of numerical schemes for different boundary conditions. For first-type (Dirichlet) boundaries, we compute stopping probabilities and payoffs at branch endpoints with adaptive treatment, achieving first-order convergence in both $L^\infty$ and $L^2$ norms. For second-type (homogeneous Neumann) boundaries, boundary contacts are modeled as reflecting events within the stochastic process, yielding theoretical convergence of order $O(h^{1/2})$ with empirically observed higher rates in practice. The framework naturally extends to periodic boundaries through modular wrapping, restoring first-order accuracy.

Through rigorous theoretical analysis, we have established the consistency, stability, and positivity-preserving properties of the proposed schemes. The implicit formulation allows for large time steps while maintaining numerical stability, and the non-compact stencil enables handling of general anisotropic problems. Numerical experiments confirm the theoretical convergence rates and demonstrate the effectiveness of the approach across all boundary types.

This research contributes a cohesive numerical methodology that synthesizes three important features:
\begin{itemize}
	\item {Inherent positivity preservation} through constructive design
	\item {Physics-compatible boundary treatments} for diverse condition types  
	\item {Rigorous \(L^\infty\) convergence guarantees} under practical discretization
\end{itemize}
\section*{Acknowledgments}

This work was supported by the Natural Science Foundation of China (no. 12371401).

\bibliographystyle{amsplain}

\begin{thebibliography}{25}
	
	\bibitem{bib1}
	G. Barles and P. E. Souganidis.
	Convergence of approximation schemes for fully nonlinear second order equations.
	\emph{Asymptot. Anal.}, 4:271--283, 1991.
	
	\bibitem{bib2}
	J. F. Bonnans and H. Zidani.
	Consistency of generalized finite difference schemes for the stochastic HJB equation.
	\emph{SIAM J. Numer. Anal.}, 41:1008--1021, 2003.
	
	\bibitem{bib3}
	M. G. Crandall and P. L. Lions.
	Convergent difference schemes for nonlinear parabolic equations and mean curvature motion.
	\emph{Numer. Math.}, 75:17--41, 1996.
	
	\bibitem{bib4}
	K. Debrabant and E. R. Jakobsen.
	Semi-Lagrangian schemes for linear and fully non-linear diffusion equations.
	\emph{Math. Comp.}, 82:1433--1462, 2013.
	
	\bibitem{bib5}
	D. J. Duffy.
	\emph{Finite Difference Methods in Financial Engineering: A Partial Differential Equation Approach}.
	John Wiley and Sons, Chichester, 2006.
	
	\bibitem{bib6}
	A. Fahim, N. Touzi, and X. Warin.
	A probabilistic numerical method for fully nonlinear parabolic PDEs.
	\emph{Ann. Appl. Probab.}, 21:1322--1364, 2011.
	
	\bibitem{bib7}
	Z. M. Gao and J. M. Wu.
	A second-order positivity-preserving finite volume scheme for diffusion equations on general meshes.
	\emph{SIAM J. Sci. Comput.}, 37:A420--A438, 2015.
	
	\bibitem{bib8}
	W. J. Guo, J. F. Zhang, and J. Zhuo.
	A monotone scheme for high-dimensional fully nonlinear PDEs.
	\emph{Ann. Appl. Probab.}, 25:1540--1580, 2015.
	
	\bibitem{bib9}
	H. J. Kushner and P. G. Dupuis.
	\emph{Numerical Methods for Stochastic Control Problems in Continuous Time}.
	Springer-Verlag, New York, 1992.
	
	\bibitem{bib10}
	D. Kuzmin, M. J. Shashkov, and D. Svyatskiy.
	A constrained finite element method satisfying the discrete maximum principle for anisotropic diffusion problems.
	\emph{J. Comput. Phys.}, 228:3448--3463, 2009.
	
	\bibitem{bib11}
	T. S. Motzkin and W. Wasow.
	On the approximation of linear elliptic differential equations by difference equations with positive coefficients.
	\emph{J. Math. Phys.}, 31:253--259, 1952.
	
	\bibitem{bib12}
	J. M. Nordbotten, I. Aavatsmark, and G. T. Eigestad.
	Monotonicity of control volume methods.
	\emph{Numer. Math.}, 106:255--288, 2007.
	
	\bibitem{bib13}
	P. Sharma and G. W. Hammett.
	Preserving monotonicity in anisotropic diffusion.
	\emph{J. Comput. Phys.}, 227:123--142, 2007.
	
	\bibitem{bib14}
	Z. Q. Sheng and G. W. Yuan.
	A nine point scheme for the approximation of diffusion operators on distorted quadrilateral meshes.
	\emph{SIAM J. Sci. Comput.}, 30:1341--1361, 2008.
	
	\bibitem{bib16}
	S. H. Xu, X. F. Chen, C. Liu, and others.
	Numerical method for multi-alleles genetic drift problem.
	\emph{SIAM J. Numer. Anal.}, 57:1770--1788, 2019.
	
	\bibitem{bib17}
	G. W. Yuan and Z. Q. Sheng.
	Analysis of accuracy of a finite volume scheme for diffusion equations on distorted meshes.
	\emph{J. Comput. Phys.}, 224:1170--1189, 2007.
	
	\bibitem{bib18}
	G. W. Yuan and Z. Q. Sheng.
	Monotone finite volume schemes for diffusion equations on polygonal meshes.
	\emph{J. Comput. Phys.}, 227:6288--6312, 2008.
	
	\bibitem{bib20}
	Jing Li, Hongtao Yang, Yonghai Li, and Guangwei Yuan.
	The corrected finite volume element methods for diffusion equations satisfying discrete extremum principle.
	\emph{Commun. Comput. Phys.}, 32(5):1437--1473, 2022.
	
	\bibitem{bib21}
	Konstantin Lipnikov, Gianmarco Manzini, and Mikhail Shashkov.
	A vertex-centered and positivity-preserving scheme for anisotropic diffusion problems on arbitrary polygonal grids.
	\emph{J. Comput. Phys.}, 344:455--474, 2017.
	
	\bibitem{bib22}
	A. Barth, A. Lang, and C. Schwab.
	Multilevel Monte Carlo method for parabolic stochastic partial differential equations.
	\emph{BIT Numer. Math.}, 53:3--27, 2013.
	
	\bibitem{bib23}
	Jie Ren, Haoran Xu, and Xingye Yue.
	Expectation-based positivity-preserving noncompact numerical schemes.
	\emph{Sci. Sin. Math.}, 54(3):515--528, 2024.
	
	\bibitem{bib24}
	K. Ma and P. A. Forsyth.
	An unconditionally monotone numerical scheme for the two-factor uncertain volatility model.
	\emph{IMA J. Numer. Anal.}, 37(2):905--944, 2017.
	
	\bibitem{bib25}
	Haoran Xu, Jie Ren, and Xingye Yue.
	A non-compact positivity-preserving scheme for parabolic PDE via conditional expectation.
	arXiv preprint arXiv:2601.10977, 2026.
	
\end{thebibliography}

\end{document}